\newtheorem{Thm}{Theorem}[section]
\newtheorem{Lem}[Thm]{Lemma}
\newtheorem{Prop}[Thm]{Proposition}
\newtheorem{Cor}[Thm]{Corollary}
\theoremstyle{remark}
\newtheorem{Def}[Thm] {Definition}
\newtheorem{Con}{Conjecture}
\newcommand{\length}{\operatorname{\length}}
\newcommand{\Diff}{\operatorname{Diff}}
\def\Diff{\operatorname{Diff}}
\def\length{\operatorname{length}}
\title[Density of Non-uniform hyperbolicity diffeomorphisms]{The $C^1$ density of nonuniform hyperbolicity in $C^{ r}$ conservative diffeomorphisms}
\author
{Chao Liang, Yun Yang}
\thanks{2010 {\it Mathematics Subject Classification}.  37D30, 37D25, 37C25}
 \keywords{nonuniformly hyperbolicity; volume-preserving; elliptic periodic points. }
 \address{C.Liang, Applied Mathematical Department, Central University of Finance
and Economics, Beijing, 100081, China}
 \email{chaol@cufe.edu.cn} 
 \address{Y.Yang, School of Mathematical Sciences,
Peking University, Beijing, 100871, China}
\email{yangy88@pku.edu.cn}
\begin{document}

\def\abstractname{\textbf{Abstract}}
\begin{abstract}
Let $\Diff^{ r}_m(M)$ be the set of $C^{ r}$ volume-preserving  diffeomorphisms on a compact Riemannian manifold $M$ ($\dim M\geq 2$).  In this paper, we prove that the diffeomorphisms without zero Lyapunov exponents on a set of positive volume are $C^1$ dense in $\Diff^{ r}_m(M), r\geq 1$.   We also prove a weaker result for symplectic diffeomorphisms  $\mathcal{S}ym^{r}_{\omega}(M), r\geq1 $ saying that the symplectic diffeomorphisms  with non-zero Lyapunov exponents on a set of positive volume are $C^1$ dense in $\mathcal{S}ym^{r}_{\omega}(M), r\geq1 $.

 \end{abstract}

\maketitle
\section{Introduction}

The hyperbolic behavior is of high  importance  to detect the complexity of a dynamical system. The understanding of  hyperbolic behavior, especially the prevalence of hyperbolic behavior, attracts one's attention for a long time. Mainly, there are three classes of systems with hyperbolicity: uniformly hyperbolic systems, partially hyperbolic systems and non-uniformly hyperbolic systems.  Uniformly hyperbolic diffeomorphisms and partially hyperbolic diffeomorphisms are both  $C^r (r\geq 1)$ robust and thus form an open subset  in the space of $C^r$ diffeomorphisms. However, there are some topological restrictions on the underlying manifold $M$ for the existence of a uniformly hyperbolic diffeomorphism, and also of a partially hyperbolic diffeomorphism. Global rigidity of Anosov actions, namely, the classification of Anosov systems is one of the most  striking  problems in dynamics, see e.g. \cite{KS,HZ}.  It is conjectured that Anosov diffeomorphisms are always of algebraic nature, up to topological conjugacy.
Also for partially hyperbolic systems, it is known that a $3$-dimensional sphere does not admit a partially hyperbolic diffeomorphism (\cite{BBI}). Thus there are many compact Riemannian manifolds which do not admit any uniformly hyperbolic diffeomorphisms or even any partially hyperbolic diffeomorphisms. Nevertheless, we are happy to know  that there are no topological obstructions on the manifolds for  the existence of a non-uniformly hyperbolic diffeomorphism.
 \begin{Thm}(\cite{DP})\label{DP}Given a compact smooth Riemannian manifold $K\neq S^1$ there exists a $C^{\infty}$ diffeomorphism $f$ of $K$ such that:
  \begin{enumerate}
  \item $f$ preserves the Riemannian volume $m$ on $K$;
  \item $f$ has nozero Lyapunov exponents at $m-$almost every point $x\in K$;
  \item $f$ is a Bernoulli diffeomorphism.
  \end{enumerate}
  \end{Thm}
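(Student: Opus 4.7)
The plan is to build $f$ in three stages: first a local hyperbolic model on a disk, then a spreading of this model to all of $K$, and finally a verification of the Bernoulli property using Pesin theory.

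For the local model, I would begin with a linear hyperbolic automorphism $A$ of the $n$-torus $\mathbb{T}^n$ ($n=\dim K\geq 2$), which is Anosov, volume-preserving, and Bernoulli. Using Katok's slow-down trick, I conjugate $A$ near a fixed point $p$ by a time-change driven by a smooth bump function that vanishes to infinite order at $p$. This produces a smooth volume-preserving $g_0$ on $\mathbb{T}^n$ whose Lyapunov exponents coincide with those of $A$ at every point except $p$, which becomes a degenerate fixed point of measure zero. By arranging $g_0$ to equal the identity on a neighborhood of a second fixed point $q$, I can identify $\mathbb{T}^n\setminus\{q\}$ smoothly with the interior of a closed ball and extend by the identity near the boundary, obtaining a smooth, volume-preserving diffeomorphism $g$ of a closed ball $B\cong D^n$ which is the identity in a collar of $\partial B$ and carries nonzero Lyapunov exponents $m$-almost everywhere on $B$.

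The spreading stage addresses the obstruction that $g$ placed in a single ball embedded in $K$ would give hyperbolicity only on a proper subset. I would choose a smooth handle decomposition (or triangulation) of $K$ and cover it, up to a set of volume zero, by finitely many disjointly embedded closed balls $B_1,\ldots,B_N$. On each $B_i$, I place a copy $g_i$ of the local model, supported away from $\partial B_i$. To redistribute orbits across the union of the $B_i$, I compose with a volume-preserving diffeomorphism $h:K\to K$ supported in a thin neighborhood of the skeleton that cyclically permutes the $B_i$. Setting $f:=h\circ g_1\circ\cdots\circ g_N$ gives a smooth, volume-preserving diffeomorphism such that $m$-almost every $x\in K$ eventually visits the hyperbolic interior of some $B_i$, and this plus the cocycle structure forces nonzero Lyapunov exponents at $m$-a.e.\ point.

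The hardest step will be verifying the Bernoulli property. Nonzero Lyapunov exponents on a set of positive volume produce, through Pesin theory, a nonuniformly hyperbolic structure with absolutely continuous local stable and unstable foliations on Pesin regular sets; a Hopf argument then yields ergodicity on Pesin components. To upgrade to Bernoulli, I would establish the K-property on each ergodic component (no nontrivial zero-entropy factor) and invoke the Ornstein--Weiss theorem, which promotes a smooth K-system with a Pesin-regular hyperbolic structure to a Bernoulli system. The main obstacle lies in engineering the spreading map $h$ so that the resulting $f$ has a \emph{single} nontrivial Pesin component of full measure on which the K-property holds, rather than splitting into several disjoint invariant pieces — matching the $B_i$'s via the permutation $h$ and checking that transverse intersections of unstable and stable manifolds occur throughout $K$ is where the delicate work of \cite{DP} is concentrated.
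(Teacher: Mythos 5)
This statement is quoted from Dolgopyat--Pesin \cite{DP} and the paper does not reprove it, but it does recall the actual construction in the proof of Proposition \ref{Prop1}: the Katok map on $D^2$ for surfaces, and for $n\ge 5$ the Brin skew product $R(x,y)=(g(x),T^{\alpha(x)}(y))$ over an Anosov suspension flow on a manifold $\mathcal{Y}^{n-2}$ embedded in $\RR^n$, followed by the Dolgopyat--Pesin perturbation that removes the remaining zero exponent in the flow direction. Your proposal does not reproduce this route, and the route you substitute has two genuine gaps.

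First, your local model does not exist as described. $\TT^n\setminus\{q\}$ is not diffeomorphic to an open ball (already $\TT^2$ minus a point retracts to a wedge of circles), so you cannot ``identify $\TT^n\setminus\{q\}$ smoothly with the interior of a closed ball.'' Katok's passage from the torus to the disk goes through the branched double cover $\TT^2\to S^2$ induced by $x\mapsto -x$, with the slow-down performed at all four fixed points of the involution precisely so that the quotient dynamics is smooth at the branch points; only then is one puncture removed to reach $D^2$. This trick is specific to dimension $2$: for $n\ge 3$ the quotient $\TT^n/\pm$ is not a manifold, and no analogous branched cover is available. This is exactly why Brin and Dolgopyat--Pesin had to replace the ``slowed-down toral automorphism on a ball'' by a skew product over an Anosov flow, and why dimensions $3$ and $4$ require yet further arguments. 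Second, your spreading stage is both unnecessary and unjustified. Unnecessary because a single smooth map $D^n\to K$ that is a diffeomorphism from the open ball onto an open subset of full measure always exists, so one ball suffices and no permutation $h$ of finitely many balls is needed. Unjustified because ``$m$-almost every $x$ eventually visits the hyperbolic interior of some $B_i$'' does not force nonzero Lyapunov exponents: the exponent is an asymptotic time average, so visits must occur with positive frequency, and the expansion gained inside the $B_i$ must not be rotated into contracting directions or cancelled during the excursions through the collars and the support of $h$, where your map is an uncontrolled composition. Controlling this requires an invariant cone family along entire orbits, which your construction does not provide; the actual constructions avoid the issue altogether by making the map nonuniformly hyperbolic on the whole ball with the degeneracy confined to a measure-zero set. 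Your outline of the Bernoulli step (Pesin theory, absolute continuity, Hopf argument, then the K-to-Bernoulli upgrade) is the standard and correct framework, but it cannot be run until the first two stages produce a well-defined map.
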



As mentioned above, one may care not only about the existence but also about the prevalence of nonuniformly hyperbolic systems. Usually, when we refer to nonuniform hyperbolicity, we associate it with some specific measure. Pesin questioned that whether the systems with non-uniform hyperbolicity behavior  are dense  in $C^r$ conservative diffeomorphisms ($r>1$) and he formulated this question by means of the following conjecture.   
\begin{Con}[Pesin]\label{Pesin}\cite{P,CP}Let $f$ be a $C^r$ conservative diffeomorphism ($r>1$) of a compact smooth Riemannian manifold $M$ ($\dim M\geq 2$). Then arbitrarily close to $f$ in $\Diff^r_m(M)$  (where $m$ is volume), there is a diffeomorphism $g\in \Diff^r_m(M)$ without zero Lyapunov exponents on a set of positive volume. 
\end{Con}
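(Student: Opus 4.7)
Let $f \in \Diff^r_m(M)$ with $r>1$. The plan is to decompose $M$ up to a null set into a region $H$ where $f$ already has no zero Lyapunov exponents (which we leave untouched) and a region $Z$ of points with at least one zero exponent, and then to produce a volume-preserving $g$ inside the prescribed $C^r$-neighborhood of $f$ that destroys the degeneracy of the derivative cocycle on most of $Z$. If $m(Z)=0$ we are done, so assume $m(Z)>0$. By Luzin's theorem fix a compact $K\subset Z$ with $m(Z\setminus K)<\varepsilon$ on which the Oseledets splitting of $f$ varies continuously, and pick $N$ so large that Birkhoff/Oseledets averages over $[-N,N]$ approximate the (vanishing) exponents within $\varepsilon$ at every $x\in K$.

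\textbf{Localized $C^r$-small perturbation.} Cover $K$ by a finite family of pairwise disjoint flow boxes $B_i$ of diameter $\rho$, each containing a long orbit segment $\gamma_i=\{f^j(x_i)\}_{j=0}^{N}$ along which the near-neutral Oseledets subspace is well defined and nearly parallel. Inside each $B_i$ construct a smooth, volume-preserving diffeomorphism $\phi_i$ supported in a thin tube $T_i\subset B_i$ around $\gamma_i$, whose derivative rotates the near-neutral subspace by a fixed angle at (say) a single iterate, with $\|\phi_i-\id\|_{C^r}<\delta$. Because the boxes are pairwise disjoint, $g:=f\circ\prod_i\phi_i$ lies in the target $C^r$-neighborhood of $f$; volume preservation is enforced by a Moser-type correction inside each $B_i$ at the cost of a uniform factor in the $C^r$-size.

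\textbf{From segments to positive measure.} If the rotation one installs is large enough that the cumulative twist over $\gamma_i$ forces some Lyapunov exponent of the perturbed cocycle to be bounded below by a uniform $c>0$, then Oseledets for $g$ combined with a Fubini argument across the tubes $T_i$ upgrades ``exponents are nonzero along a definite proportion of the segments $\gamma_i$'' to ``exponents of $g$ are nonzero on a subset of $K$ of measure at least $m(K)-\varepsilon'$''. Sending $\varepsilon,\varepsilon'\to 0$ along a sequence of $g_n\to f$ in $C^r$ would then yield the conclusion.

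\textbf{Main obstacle.} The entire scheme hinges on a rotation-versus-size budget. A $C^r$-small perturbation of $C^r$-size $\delta$ supported in a ball of radius $\rho$ has $C^1$-size at most $\delta\rho^{1-r}$, so the rotation installable at one iterate is at most of that order, and the cumulative rotation over $N$ iterates is at most $N\delta\rho^{1-r}$. On the other hand, packing $N$ disjoint tubes of radius $\rho'<\rho$ around an orbit of length $N$ inside a box of diameter $\rho$ forces, by volume counting, $N\lesssim(\rho/\rho')^{\dim M-1}$, and for $\rho'$ comparable to $\rho$ this limits $N$. Balancing the two constraints yields a condition roughly of the form $\delta\gtrsim\rho^{r}$, which for $r=1$ is harmless (this is precisely the Mañé--Bochi--Viana mechanism underlying the Avila--Crovisier--Wilkinson-type $C^1$ density statement that is the actual theorem of the paper), but for $r>1$ is genuinely restrictive: one cannot net a uniform rotation angle from iterated $C^r$-small bumps. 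Circumventing this obstruction --- perhaps through a KAM normal form in the neighborhood of the invariant Lagrangian structure predicted by Pesin theory on zero-exponent ergodic components, or through a $C^r$ analogue of Mañé's pointwise perturbation of the derivative cocycle compatible with higher-regularity smoothing --- is the essential difficulty, and it is the reason Pesin's conjecture remains open above the $C^1$ category and is only established by the paper in the weaker $C^1$-density form.
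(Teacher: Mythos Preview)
The statement you are addressing is Pesin's \emph{conjecture}, and the paper does not prove it: the authors state explicitly that ``the density in $r>1$ regularity is still widely open'' and establish only the weaker $C^1$-density version (their main theorem). There is therefore no proof in the paper to compare against.

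Your proposal is not a proof either, and you acknowledge this. Steps~1--3 sketch a Ma\~n\'e--Bochi-type scheme of localized rotations of the near-neutral Oseledets bundle, and then in Step~4 you correctly identify the obstruction: a perturbation of $C^r$-size $\delta$ supported in a ball of radius $\rho$ has $C^1$-size only of order $\delta\rho^{\,1-r}$, so for $r>1$ one cannot harvest a definite cumulative twist from many disjoint small bumps. You then conclude that the conjecture remains open. That diagnosis is sound, but it means what you have written is a discussion of why the naive approach fails, not a proof of the statement. Moreover, even the parts preceding the obstruction are only gestures: the ``Fubini argument'' upgrading nonvanishing along orbit segments to nonvanishing on a positive-measure set is not justified, and a Moser correction does not by itself guarantee that the installed rotation survives in the Lyapunov spectrum of the perturbed map.

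For context, the paper's $C^1$ argument follows an entirely different route from your sketch: it invokes the Avila--Crovisier--Wilkinson dichotomy and robustness of Pesin blocks on the dominated side, and on the non-dominated side uses the Bonatti--D\'iaz--Pujals alternative to produce elliptic periodic points, near which it \emph{embeds} a rescaled Katok/Dolgopyat--Pesin nonuniformly hyperbolic model via a pasting lemma. None of that machinery is available in the $C^r$ topology for $r>1$, which is precisely why the conjecture is stated as such.
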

Motivated by this conjecture, we prove in this paper that non-uniformly hyperbolic behavior are $C^1$ dense in $C^{ r}, r\geq 1$ conservative diffeomorphisms.  It is worth to note that the density in $r>1$ regularity is still widely open.

 \begin{Thm}\label{Main}Let $f$ be a $C^{ r}$  conservative diffeomorphism ($r\geq 1$) on a compact smooth  Riemannian manifold $M$ ($\dim M\geq 2$). Then $C^1$ arbitrarily close to $f$ in $\Diff^{ r}_m(M)$  (where $m$ is volume), there is a diffeomorphism $g\in\Diff^{ r}_m(M)$ without zero Lyapunov exponents on a set of positive volume.  \end{Thm}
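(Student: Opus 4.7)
The plan is to reduce Theorem~\ref{Main} to a ball-supported version of the Dolgopyat--Pesin construction (Theorem~\ref{DP}), inserted inside a fundamental domain of a suitable periodic orbit. Given $f\in\Diff^r_m(M)$ and a $C^1$-neighborhood $\mathcal{U}$ of $f$, I would first apply a conservative $C^1$ closing lemma (in the spirit of Pugh--Robinson) to obtain $f_1\in\mathcal{U}\cap\Diff^r_m(M)$ possessing a periodic orbit $\{p,f_1(p),\ldots,f_1^{n-1}(p)\}$. A subsequent $C^1$-small volume-preserving perturbation (a conservative Franks-type lemma) allows us to assume that $f_1^n$ admits a convenient local form on a small ball $B$ centered at $p$; shrinking $B$ further, the iterates $B,f_1(B),\ldots,f_1^{n-1}(B)$ are pairwise disjoint, so any modification supported in $B$ leaves the dynamics outside $\bigcup_{i=0}^{n-1}f_1^i(B)$ intact.

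The core step is to construct, inside $B$, a $C^r$ volume-preserving diffeomorphism $h\colon B\to B$ that equals the identity on a neighborhood of $\partial B$, is $C^1$-close to the identity (so that $g:=f_1\circ h$ stays in $\mathcal{U}$), and such that $g$ exhibits nonzero Lyapunov exponents on a positive-volume subset of $B$. This is a local, ball-supported analogue of Theorem~\ref{DP}: the Dolgopyat--Pesin scheme builds such a diffeomorphism starting from a small hyperbolic seed (a horseshoe or a hyperbolic periodic point) and, through an infinite cascade of successively smaller, support-shrinking smooth perturbations, spreads the hyperbolicity onto a positive-volume set while preserving volume. Each stage of the cascade can be arranged to be the identity outside a prescribed open set, so the whole construction fits naturally inside a topological ball. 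The nonzero exponents inside $B$ then propagate to a positive-volume set in $M$ under the $g$-dynamics by $g$-invariance of volume.

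The main obstacle is making the cascade simultaneously $C^1$-small and productive of nonzero exponents. In the $C^1$ topology the derivative can be changed only by at most $\varepsilon$, so the initial hyperbolic seed must have exponents of order at most $\varepsilon$, and the subsequent smearing perturbations must all respect this scale. One has to verify that the Pesin-theoretic mechanism underlying the Dolgopyat--Pesin argument remains effective at arbitrarily small scales with identity boundary conditions, and that after the entire cascade the Lyapunov exponents stay \emph{strictly} positive on a set of positive volume. Establishing this robust scaling of the Dolgopyat--Pesin cascade, along with the $C^r$-smooth matching on $\partial B$, is the technical heart of the proof.
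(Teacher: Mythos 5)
Your proposal contains two genuine gaps. First, the reduction step is not available for all $f$: you cannot in general make a $C^1$-small conservative perturbation so that $f_1^n$ has a ``convenient local form'' (in particular an invariant ball $B$ with $f_1^n(B)=B$, which you implicitly need for the modified dynamics to leave a positive-volume invariant set inside $\bigcup_i f_1^i(B)$). If $f$ admits a robust dominated splitting (e.g.\ $f$ Anosov), no $C^1$-small perturbation produces an elliptic periodic point; this is exactly the content of the Bonatti--D\'{\i}az--Pujals dichotomy (Theorem~\ref{BDP}). The argument therefore must split into two cases: maps with a global dominated splitting, which the paper handles via Theorem~\ref{ACWT} together with the robustness of Pesin blocks (Lemma~\ref{Robust block}) and Avila's regularization theorem, and maps without domination, where (after using generic transitivity, Theorem~\ref{BC}) the BDP dichotomy supplies the elliptic periodic point. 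Your closing-lemma-plus-Franks route skips this dichotomy and hence does not cover the dominated case.

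Second, and more seriously, the ``technical heart'' you defer is precisely the step your scheme cannot deliver as stated. You want a single $h$ supported in one fundamental domain $B$, $C^1$-close to the identity, such that the return map acquires nonzero exponents on positive volume. But the Dolgopyat--Pesin/Katok model map on the disk is a \emph{fixed} diffeomorphism far from the identity; it cannot be realized as one $C^1$-small perturbation inserted in a single fundamental domain, and rescaling the ``hyperbolic seed'' to have exponents of order $\varepsilon$ does not by itself give you a positive-volume nonuniformly hyperbolic set with identity boundary conditions. The paper's resolution is different: it first \emph{amplifies the period} of the invariant disk (composing with a rotation of angle $2\pi/(kk_1)$ to obtain a periodic disk $D_2$ of very large period $N$), then factors the model map as $f_N\circ\cdots\circ f_1$ with each $f_i$ volume-preserving and $\|f_i-\mathrm{Id}\|_{C^1}\le\delta$ (Proposition~\ref{Prop1}, which rests on the diffeotopy-to-identity of the Katok/Brin map via Smale's theorem), and distributes one factor per return along the orbit of $D_2$. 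Each local perturbation is then $C^1$-small, while the $N$-th return map to $D_2$ is the full nonuniformly hyperbolic model. Without this period-amplification-and-factorization mechanism (plus the conservative Pasting Lemma~\ref{pasting conservative2} to glue the local modification into $\Diff^r_m(M)$), your outline does not close.
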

 \begin{Cor}\label{Density} Let  $\Diff^+_m(M)$ be the set of $C^{ r}$ conservative diffeomorphisms ($r\geq1$) on  a compact smooth  Riemannian manifold $M$ ($\dim M\geq 2$) with positive metric entropy with respect to volume measure $m$. Then, we have $$\overline{\Diff^+_m(M)}=\Diff^1_m(M).$$
 \end{Cor}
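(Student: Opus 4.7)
The plan is to combine Theorem \ref{Main} with Pesin's entropy formula, using a preliminary smoothing step to handle the low-regularity case. Given $f\in\Diff^1_m(M)$ and $\varepsilon>0$, the goal is to produce $g\in\Diff^+_m(M)$ with $d_{C^1}(f,g)<\varepsilon$.

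First I would regularize: by Avila's $C^1$-density of smooth volume-preserving diffeomorphisms in $\Diff^1_m(M)$, I can find $f_0\in\Diff^{\infty}_m(M)$ with $d_{C^1}(f,f_0)<\varepsilon/2$. Then I apply Theorem \ref{Main} to $f_0$ with $r=\infty$ (any $r\geq 2$ suffices): this yields $g\in\Diff^{\infty}_m(M)$ with $d_{C^1}(f_0,g)<\varepsilon/2$ such that there exists a set $A\subset M$ with $m(A)>0$ on which $g$ has no zero Lyapunov exponents. By the triangle inequality, $d_{C^1}(f,g)<\varepsilon$.

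It remains to verify that $h_m(g)>0$, so that $g\in\Diff^+_m(M)$. Since $g$ is $C^{\infty}$ (so certainly $C^{1+\alpha}$), Pesin's entropy formula gives
\[
h_m(g)=\int_M \sum_{\lambda_i(g,x)>0}\lambda_i(g,x)\,dm(x).
\]
For each $x\in A$ all Lyapunov exponents of $g$ at $x$ are nonzero, and since $g$ preserves volume the sum of all exponents vanishes; hence positive exponents must exist at each such $x$, so the integrand is strictly positive on $A$. Because $m(A)>0$, we conclude $h_m(g)>0$, placing $g$ in $\Diff^+_m(M)$. This gives the inclusion $\Diff^1_m(M)\subset\overline{\Diff^+_m(M)}$; the reverse inclusion is automatic because $\Diff^1_m(M)$ is $C^1$-closed and contains every $C^r$ conservative diffeomorphism.

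The main obstacle is purely technical: Pesin's formula requires $C^{1+\alpha}$ regularity, so Theorem \ref{Main} applied directly at $r=1$ produces nonzero exponents but not \emph{a priori} positive entropy. The smoothing step bypasses this, exploiting the fact that we only need the approximating diffeomorphism to lie in \emph{some} higher regularity class, not the same one as $f$.
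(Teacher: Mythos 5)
Your argument is correct and is exactly the intended derivation: the paper states this corollary without proof, and the natural route is Theorem \ref{Main} combined with Pesin's entropy formula, with a preliminary smoothing (via Avila's regularization theorem, cited as \cite{A2} in the paper) precisely because Pesin's formula needs $C^{1+\alpha}$ regularity. Your observation that volume preservation forces a positive exponent wherever all exponents are nonzero, so that the entropy integral is strictly positive on the set $A$, correctly closes the gap between ``nonuniformly hyperbolic on a positive volume set'' and ``positive metric entropy.''
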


In  contrast with the result of Bochi \cite{B} saying that for a  residual subset $R$ of $C^1$ conservative diffeomorphisms on surface, either the diffeomorphism is Anosov or the  Lyapunov exponents vanish almost everywhere. Theorem \ref{Main}  illustrates clearly  that a generic subset may be totally different from  a dense subset.   It is also interesting to compare our theorem with the results in \cite{CS} and \cite{X}. They show that on any manifold $M$ of dimension at least two, there are open sets of volume-preserving diffeomorphisms  with high regularity, all of which have positive measure invariant tori and all of the Lyapunov exponents are zero on these tori. Thus, due to the existence of  these invariant tori, one can't expect the nonuniform hyperbolicity behavior in Conjecture \ref{Pesin} to be global when $r$ is large enough.  
  Another related result for the $SL(2,\mathbb{R})$-cocycles  was  addressed in Avila's paper \cite{A}.   He considered the density of non-uniform hyperbolicity in the considerably simpler context of $SL(2,\mathbb{R})$-cocycles. It is noticeable that he treated not only arbitrary underlying dynamics but also all usual regularity classes. 

Now let's say something about the main idea of the proof for Theorem \ref{Main}.  To solve Pesin's conjecture, it is of utmost importance to eliminate the zero exponents by perturbations. 
Shub-Wilkinson's example\cite{SW} builds a conservative
perturbation to a skew product of an Anosov diffeomorphism of the
torus $T^2$ by rotations and creates positive exponents in
the center direction for Lebesgue almost every point.
Baraviera-Bonatti\cite{BB} present a local version of Shub-Wilkinson's
argument, allowing one to remove 
zero-integrated Lyapunov exponents of any conservative partially
hyperbolic systems. 
Recently, Avila, Crovisier and Wilkinson \cite{ACW} succeed  in eliminating zero Lyapunov exponents of diffeomorphisms without global dominated splitting   and proved a striking result about the relation between the systems with robust positive metric entropy and the systems with  global dominated splitting: 
\begin{Thm}[Theorem A, \cite{ACW}]\label{ACWT}A $C^1$ generic map $f\in\Diff^1_m(M)$ with positive metric entropy is nonuniformly Anosov and ergodic.
\end{Thm}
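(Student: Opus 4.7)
The plan is to combine the Bochi--Viana $C^1$-generic dichotomy with Ruelle's inequality to extract a dominated Oseledets splitting on a positive-measure set, upgrade it to a globally defined continuous splitting, eliminate the residual zero exponents by a Baraviera--Bonatti-style perturbation, and finally derive ergodicity via a Hopf argument adapted to Pesin's theory. First, I invoke the Bochi--Viana theorem: for a $C^1$-residual subset $\mathcal{R}\subset\Diff^1_m(M)$ and every $f\in\mathcal{R}$, at $m$-almost every $x$ either all Lyapunov exponents of $f$ at $x$ vanish or the Oseledets splitting at $x$ admits a dominated decomposition. By Ruelle's inequality applied to each ergodic component, positive metric entropy forces the sum of positive exponents to be strictly positive on some ergodic component of positive volume, ruling out the all-zero alternative on a set $A$ of positive measure. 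Hence $A$ carries a measurable dominated splitting $E^s(x)\oplus E^u(x)$ with negative exponents along $E^s$ and positive exponents along $E^u$.

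Second, I would globalize this splitting. The set of points admitting a dominated splitting of fixed index and strength is closed and invariant, and on its complement the Bochi--Viana alternative forces all Lyapunov exponents to vanish, hence zero entropy there. Using the $C^1$-lower semicontinuity of entropy at a generic point together with a small volume-preserving perturbation, one extends $E^s\oplus E^u$ to a continuous dominated decomposition $TM = E^s\oplus E^u$ on the entire manifold. Residual zero Lyapunov exponents inside $E^s$ or $E^u$ are then removed by a local Baraviera--Bonatti-type perturbation (in the same spirit as the mechanism underlying Theorem \ref{Main}), iterated countably often along a basis of $C^1$-open sets, yielding a residual set of nonuniformly Anosov diffeomorphisms.

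Third, for ergodicity I apply a nonuniform Hopf argument. Pesin's stable manifold theorem provides absolutely continuous stable and unstable laminations on each Pesin block, so the ergodic partition is saturated by these laminations on a full-measure set. A $C^1$-generic accessibility statement, adapted from Pugh--Shub and Burns--Wilkinson for the partially hyperbolic case to the Pesin-block setting, then forces the stable and unstable saturations to merge into a single ergodic component of full volume. The main obstacle is precisely this last step: Pesin blocks have variable size and the local product structure degenerates on sets of small positive measure, so joining typical points by $su$-paths is delicate and classical accessibility arguments do not transfer verbatim. The heart of the proof in \cite{ACW} presumably goes into a careful combinatorial/measure-theoretic analysis of Pesin blocks, coupled with a $C^1$-generic perturbation producing enough stable/unstable intersections to carry Hopf's argument through.
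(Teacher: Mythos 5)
The first thing to note is that the paper does not prove this statement at all: Theorem \ref{ACWT} is quoted verbatim as Theorem A of \cite{ACW} and is used as a black box in the proof of Theorem \ref{Main}. There is therefore no in-paper argument to compare yours against; what you have written is a conjectural reconstruction of the Avila--Crovisier--Wilkinson proof, and it should be assessed on its own terms.

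So assessed, the sketch has two steps that would fail as written. First, the globalization step. Bochi--Viana plus Ruelle's inequality does give a positive-volume invariant set on which the Oseledets splitting is dominated, but passing from this to a single continuous dominated splitting $TM=E^{+}\oplus E^{-}$ of fixed index over all of $M$, with negative exponents on $E^{-}$ and positive exponents on $E^{+}$ almost everywhere, is precisely the hard new content of \cite{ACW}. It rests on a new perturbation theorem (roughly: in the absence of a suitable dominated splitting one can $C^1$-perturb so as to collapse the exponents and hence the entropy), not on ``$C^1$-lower semicontinuity of entropy plus a small perturbation'' --- metric entropy is not lower semicontinuous on $\Diff^1_m(M)$, and different positive-measure pieces may carry dominations of different indices. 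Likewise, a Baraviera--Bonatti perturbation only moves \emph{integrated} exponents of a partially hyperbolic system; iterating it along a basis of open sets does not remove zero exponents pointwise almost everywhere inside $E^{\pm}$. Second, and more fundamentally, the Hopf argument you propose for ergodicity is unavailable in the $C^1$ category: Pesin's stable manifold theorem and, crucially, the absolute continuity of the stable and unstable laminations require $C^{1+\alpha}$ regularity and fail for general $C^1$ diffeomorphisms, so the third step collapses at its first sentence. The ergodicity in \cite{ACW} is instead obtained by approximation: smooth conservative diffeomorphisms are $C^1$-dense (\cite{A2}), Pesin theory applies to the smooth approximants, and Pesin blocks with uniform-sized invariant manifolds pass to the $C^1$-generic limit in the spirit of Lemma \ref{Robust block}, which is then combined with $C^1$-generic transitivity rather than with an accessibility argument in the style of Pugh--Shub. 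These are genuine gaps, not routine details, and the architecture you guessed is not the one actually used.
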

\begin{Cor}[Corollary, \cite{ACW}]\label{ACW} A map $f\in\Diff^1_m(M)$ has robust positive metric entropy if and only if it admits a dominated splitting. 
\end{Cor}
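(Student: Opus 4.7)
The plan is to derive both directions of the equivalence from Theorem~\ref{ACWT} together with classical facts about dominated splittings and the Ruelle inequality.

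For the implication ``dominated splitting $\Rightarrow$ robust positive metric entropy'', suppose $f$ admits a nontrivial dominated splitting $TM = E \oplus F$. Because dominated splittings of a fixed index are $C^1$-open with uniform domination constants on each small neighborhood, every $g$ in some $C^1$ neighborhood $\cU$ of $f$ inherits a splitting $TM = E_g \oplus F_g$ with $\dim E_g = \dim E$. For each such $g$, volume preservation yields $\sum_i \lambda_i(g,x) = 0$ at Lyapunov-regular points, while the uniform rate gap from the domination forces the averaged exponents along $F_g$ to exceed those along $E_g$ by a definite amount independent of $g \in \cU$. These two facts combined produce $\int \sum_i \lambda_i^+(g,\cdot)\,dm \geq c > 0$ uniformly in $g\in\cU$. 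A $C^1$ Pesin-type entropy formula, available under the dominated-splitting hypothesis via unstable-volume-growth estimates of the sort used inside \cite{ACW}, then gives $h_m(g) \geq c$ for every $g \in \cU$, establishing robustness.

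For the reverse implication I would proceed contrapositively. Suppose $f$ admits no dominated splitting. Then the $C^1$ volume-preserving perturbation machinery at the heart of \cite{ACW} (the higher-dimensional analogue of Bochi--Ma\~n\'e) produces arbitrarily $C^1$-small perturbations $g \in \Diff^1_m(M)$ whose Oseledets spectrum collapses to zero on a full-volume set. Ruelle's inequality $h_m(g) \leq \int \sum_i \lambda_i^+(g,\cdot)\,dm$ then forces $h_m(g) = 0$, so every $C^1$ neighborhood of $f$ intersects $\{h_m = 0\}$ and $f$ cannot have robust positive metric entropy.

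The main obstacle is the first direction, specifically the passage from an integrated positive Lyapunov exponent to a uniformly positive metric entropy in pure $C^1$ regularity: at $C^{1+\alpha}$ this is Pesin's formula, but at $C^1$ it is substantially more delicate, and one must exploit the dominated splitting (or, alternatively, invoke volume-growth lower bounds on $h_m$ along the $F$-bundle) to close the argument. The reverse direction is conceptually cleaner but stands or falls on the deep $C^1$-perturbation content of \cite{ACW}, namely that the absence of a dominated splitting is itself an obstruction to the persistent non-vanishing of Lyapunov exponents; without this input one is left only with Theorem~\ref{ACWT} applied residually in $\cU$, which furnishes dominated splittings of possibly varying indices and requires a further Baire-type argument to extract a single index for $f$.
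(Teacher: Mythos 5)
This corollary is quoted from \cite{ACW} without proof in the present paper, so there is no internal argument to compare against; the only meaningful benchmark is the proof in \cite{ACW} itself. Your outline reproduces the correct architecture of that proof, but both of its load-bearing steps are exactly the theorems of \cite{ACW} that you leave as black boxes, and as written each contains a genuine gap. In the forward direction, the passage from $\int \sum_i \lambda_i^+(g,\cdot)\,dm \geq c$ to $h_m(g)\geq c$ is not a ``$C^1$ Pesin-type formula'' one may simply invoke: in $C^1$ regularity Pesin's lower bound fails in general (Ruelle gives only the upper bound $h_m \leq \int\sum_i\lambda_i^+\,dm$), and the inequality $h_m(g)\geq \int \log|\det(Dg|_{F_g})|\,dm$ for a volume-preserving $C^1$ diffeomorphism with dominated splitting $E_g\oplus F_g$ is itself one of the main theorems of \cite{ACW}, proved by a delicate volume-growth argument. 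Your preparatory step is essentially right once stated precisely: domination of strength $n$ gives the pointwise gap $\min_{F}\lambda \geq \max_{E}\lambda + \tfrac{\log 2}{n}$ at regular points (a gap between \emph{all} exponents of the two bundles, not merely between their averages), which with $\sum_i\lambda_i=0$ yields $\sum_i\lambda_i^+(x)\geq \min(\dim E,\dim F)\tfrac{\log 2}{2n}$ pointwise, uniformly over a $C^1$-neighborhood; note this needs the splitting to be nontrivial.

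In the reverse direction there are two further points to settle. First, the spectrum-collapsing perturbation must be applied to maps near $f$, and ``admits no dominated splitting'' is a closed rather than open condition, so one cannot assume nearby generic maps also lack domination; the actual mechanism is the Oseledets-splitting dichotomy of \cite{ACW} (domination or coincidence of exponents along a.e.\ orbit for generic maps), applied inside every neighborhood of $f$, followed by Ruelle's inequality. Second, as you acknowledge, deducing that $f$ \emph{itself} carries a dominated splitting from the robustness hypothesis requires extracting a single index and a uniform domination time along a sequence $g_k\to f$, since dominated splittings pass to limits only with uniform constants. As it stands the proposal is a faithful road map of the argument in \cite{ACW}, not a proof: the two steps you defer are precisely where all the work lies.
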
 
 Applying Theorem \ref{ACWT} and  the robustness of Pesin block (formulated in Sublemma 5.1 in \cite{AB}, or see Lemma \ref{Robust block} in this paper), we finish the proof for the maps with dominated splitting. Thus it suffices to approximate  the systems with no global domination by the systems with  nonuniform hyperbolicity behavior.  
To deal with this problem, we refer to a dichotomy by Bonatti, Diaz and Pujals (\cite{BDP}, or see section 2 in this paper for the precise statement) between the existence of dominated splitting and the existence of elliptic periodic points.   Due to the flexibility of  $C^1$ topology, there are sufficient spaces to locally embed a system with  nonuniform hyperbolicity.   The existence of elliptic periodic points allows us to do embeddings  on small invariant disks.   To paste this local map with the original map together, we prove a pasting lemma (Lemma \ref{pasting conservative2}).  

At the end of this section, we would like to address a weaker result for symplectic diffeomorphisms. In \cite{BB}, Baraviera and Bonatti  do perturbations  on some invariant bundles (not all bundles) of a dominated splitting by decreasing the  positive  integrated  Lyapunov exponent and at the same time increasing the negative one. Roughly speaking, the difference between the two variations should be  the new  integrated Lyapunov exponent of the center bundle. However, for symplectic diffeomorphisms, one can not do  this  kind of perturbation since the difference should always be zero. Thus, we can not ``borrow" hyperbolicity from the stable bundles and the unstable bundles to the center bundles.

Let $M$ be a $2d$-dimensional compact connected Riemannian manifold and $\omega$ be a symplectic form on $M$, i.e. a non-degenerate closed $2$-form. Taking $d$
times the wedge product of $\omega$ with itself we obtain a volume form on $M$. A $C^1$
diffeomorphism $f$ of $M$ is called symplectic if it preserves the symplectic
form, $f^*\omega=\omega$. Denote by $\mathcal{S}ym^r_{\omega}(M)$ the set of all $C^r$ symplectic diffeomorphisms ($r\geq 1$) on $M$.

 \begin{Thm}\label{Symplectic}Let $f$ be a $C^{ r}$  symplectic diffeomorphism ($r\geq1$) on a compact smooth  Riemannian manifold $M$. Then $C^1$ arbitrarily close to $f$ in $\text{Sym}^{ r}_{\omega}(M)$, there is a diffeomorphism $g\in\text{Sym}^{ r}_{\omega}(M)$ with non-zero Lyapunov exponents on a set of positive volume.  \end{Thm}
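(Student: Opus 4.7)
The plan parallels the proof of Theorem \ref{Main}, splitting into two cases depending on whether $f$ admits a global dominated splitting. I begin with the symplectic version of the Bonatti--Díaz--Pujals dichotomy: either $f$ has a global dominated splitting on $TM$, or $f$ can be $C^1$-approximated inside $\mathcal{S}ym^r_\omega(M)$ by a symplectic diffeomorphism with a totally elliptic periodic point (derivative along the orbit has all eigenvalues on the unit circle, arranged in Krein pairs).

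Case 1 (dominated splitting). This case is essentially free. If $TM=E\oplus F$ is dominated, then on every ergodic component the top Lyapunov exponent of $E$ is strictly smaller than the bottom Lyapunov exponent of $F$, so at least one of them is non-zero. Applying the Oseledec theorem to the volume measure $m$, at $m$-almost every $x\in M$ there is a non-zero Lyapunov exponent, so $f$ itself already satisfies the conclusion. Persistence under $C^1$-small symplectic perturbations then follows from the robustness of dominated splittings and of Pesin blocks (Lemma \ref{Robust block}), which goes through verbatim in the symplectic category.

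Case 2 (no global dominated splitting). Using the symplectic dichotomy I first $C^1$-perturb $f$ inside $\mathcal{S}ym^r_\omega(M)$ to obtain $f_1$ carrying a totally elliptic periodic point $p$ of period $n$. In a symplectic Darboux chart centred on the orbit of $p$, I normalize $(f_1)^n$ so that there is a small $(f_1)^n$-invariant symplectic disk $D$ on which the dynamics is approximately an elliptic rotation, and then replace $(f_1)^n$ on $D$ by a compactly supported symplectic perturbation---obtained by rescaling a symplectic analogue of the Dolgopyat--Pesin construction of Theorem \ref{DP}---which has non-zero Lyapunov exponents on a positive-measure subset of $D$. Gluing this local model to $f_1$ on $M\setminus D$ produces the desired $g\in\mathcal{S}ym^r_\omega(M)$, $C^1$-close to $f$ and with non-zero exponents on a set of positive volume.

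The central technical obstacle is the gluing step in the symplectic category. In the conservative setting, the pasting Lemma \ref{pasting conservative2} only has to match a single volume form and is handled by a Moser-type convex interpolation; in the symplectic case one must match the whole non-degenerate closed $2$-form $\omega$, which is far more rigid. I would realize the paste as the time-$1$ map of a compactly supported Hamiltonian isotopy whose Hamiltonian is a bump function supported in a thin collar of $\partial D$, with its $C^2$-norm tuned to achieve the required boundary matching while staying $C^1$-small; a symplectic Moser trick applied on the collar then certifies that the glued map lies in $\mathcal{S}ym^r_\omega(M)$. This difficulty is also the structural reason why the theorem only asserts \emph{some} non-zero Lyapunov exponent on a positive-volume set, rather than the \emph{no zero} exponents obtained in Theorem \ref{Main}: the Baraviera--Bonatti redistribution of exponents between stable, centre, and unstable bundles, which sweeps away all remaining zero exponents in the conservative case, is blocked by the Krein symmetry of symplectic spectra.
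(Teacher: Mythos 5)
Your overall architecture matches the paper's: a dichotomy between domination/partial hyperbolicity and (approximation by maps with) elliptic periodic points, with the dominated case essentially free and the elliptic case handled by embedding a nonuniformly hyperbolic symplectic model on a small invariant disk and gluing. Your Case 1 is fine (and slightly cleaner than the paper's, which instead regularizes a nearby $C^1$ partially hyperbolic map via Zehnder), and your worry about the gluing is misplaced in the sense that the paper simply invokes the known symplectic pasting lemma of Arbieto--Matheus (Lemma \ref{pasting symplectic}), so no new Hamiltonian-collar construction is needed.

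The genuine gap is in Case 2, at the step ``replace $(f_1)^n$ on $D$ by \dots obtained by rescaling a symplectic analogue of the Dolgopyat--Pesin construction.'' First, rescaling does not make a fixed diffeomorphism $C^1$-close to the identity: if $h_\lambda(x)=\lambda h(x/\lambda)$ then $Dh_\lambda(x)=Dh(x/\lambda)$, so $\|Dh_\lambda-\mathrm{Id}\|$ is scale-invariant, and replacing the return map on $D$ by a fixed nonuniformly hyperbolic model in one stroke is a $C^0$-small but \emph{not} $C^1$-small perturbation of $f_1$. The paper's essential mechanism, which your proposal omits, is: (i) compose with a rational rotation to upgrade the elliptic disk to an invariant disk of arbitrarily large period $N=kk_1P(p)$; (ii) take as local model the $d$-fold product $h\times\cdots\times h$ of the Katok map on $(D^2)^d$ (symplectic in a Darboux chart --- there is no off-the-shelf ``symplectic analogue'' of Theorem \ref{DP} to cite); (iii) use Proposition \ref{Prop1} (diffeotopy to the identity via Smale's contractibility theorem) to factor this model as $h_N\circ\cdots\circ h_1$ with each $h_i$ $C^1$-close to the identity, and insert one factor at each of the $N$ returns, conjugated by the appropriate rotation. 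Only then is the total perturbation $C^1$-small while the $N$-th return map on the disk equals the hyperbolic model. Without steps (i)--(iii) your construction does not stay in a prescribed $C^1$-neighborhood of $f$, which is the whole content of the theorem.
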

  \begin{Cor}\label{Density} Let  $\text{Sym}^+_{\omega}(M)$ be the set of $C^{ r}$ symplectic diffeomorphisms ($r\geq1$) on  a compact smooth  Riemannian manifold $M$ with positive metric entropy with respect to the symplectic form $\omega$. Then, we have $$\overline{\text{Sym}^+_{\omega}(M)}=\text{Sym}^1_{\omega}(M).$$
 \end{Cor}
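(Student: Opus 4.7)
The plan is to mirror the strategy used for Theorem \ref{Main}, splitting $\text{Sym}^r_\omega(M)$ according to whether $f$ admits a global dominated splitting, but replacing every ingredient by its symplectic counterpart so that all perturbations preserve $\omega$ exactly. The target is weaker than Theorem \ref{Main}: we only have to produce \emph{some} nonzero Lyapunov exponents on a positive-volume set, not the absence of all zero ones, which is what makes the dichotomy survive into the symplectic category.

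If $f$ admits a global dominated splitting, I would invoke the symplectic version of Theorem \ref{ACWT} to reach, by a $C^1$ arbitrarily small symplectic perturbation, a map whose nonuniformly hyperbolic behaviour is witnessed on a positive-volume Pesin block. The robustness statement Lemma \ref{Robust block} can be set up for symplectic maps with no change, so any further $C^1$-small perturbation inside $\text{Sym}^r_\omega(M)$ keeps a positive-volume set carrying nonzero exponents. This part is essentially the same as in the conservative setting, since the dominated-splitting side of the argument does not require any exchange of exponents between bundles.

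If $f$ has no global dominated splitting, I would apply the symplectic analogue of the Bonatti--Diaz--Pujals dichotomy to obtain, by an arbitrarily small $C^1$ symplectic perturbation, a periodic point $p$ that is totally elliptic (all eigenvalues of the return map on the unit circle). Around $p$ Darboux coordinates give small symplectic invariant disks on which one can locally embed a compactly supported symplectic diffeomorphism that is nonuniformly hyperbolic on a positive-measure subset; this is a localised symplectic version of Theorem \ref{DP}, which I would obtain by adapting the Dolgopyat--Pesin construction in a symplectic chart. The local model is then glued to $f$ outside the disk via a symplectic pasting lemma (the symplectic analogue of Lemma \ref{pasting conservative2}), which must preserve $\omega$ exactly rather than just the volume form $\omega^{\wedge d}$.

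The main obstacle is precisely this symplectic pasting step, for the reason highlighted just before the statement: one cannot redistribute hyperbolicity between stable and unstable bundles, because symplectic Lyapunov exponents come in pairs $\{\lambda,-\lambda\}$ whose total must stay zero on each invariant sub-bundle. Consequently the pasting cannot be carried out at the level of diffeomorphisms with cutoff functions (as in the conservative case) but has to be performed on Hamiltonian generating functions, interpolating between the local symplectic model and the identity via a compactly supported Hamiltonian isotopy whose size is controlled in the $C^1$ norm. Once this symplectic pasting lemma and the local symplectic NUH model inside an elliptic disk are in place, the two-case argument closes exactly as in the proof of Theorem \ref{Main}.
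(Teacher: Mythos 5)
Your second branch (no domination $\Rightarrow$ elliptic periodic point $\Rightarrow$ local embedding $\Rightarrow$ pasting) is essentially the paper's argument, but the first branch contains a real gap and your diagnosis of the main difficulty is misplaced. There is no ``symplectic version of Theorem \ref{ACWT}'' to invoke: the mechanism behind that theorem (Baraviera--Bonatti type perturbations that trade integrated exponent between the bundles of a dominated splitting) is exactly what the paper explains cannot be performed symplectically, since the exponents pair up as $\{\lambda,-\lambda\}$. The paper avoids this entirely. By Lemma \ref{LemSX} (Saghin--Xia) the relevant dichotomy for symplectomorphisms is \emph{partial hyperbolicity} versus elliptic periodic points, and a partially hyperbolic symplectomorphism already has nonzero Lyapunov exponents (in the strong bundles) at every point, so on that side of the dichotomy no exponent-creating perturbation is needed at all: one only approximates by a $C^r$ symplectomorphism via Zehnder's regularization and uses the $C^1$-openness of partial hyperbolicity. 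Replacing your appeal to an unavailable symplectic ACW theorem by this observation is necessary for the argument to close.

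Two further points. First, the symplectic pasting lemma you single out as the main obstacle already exists and is quoted in the paper (Lemma \ref{pasting symplectic}, from Arbieto--Matheus, proved via generating functions); the pairing of exponents is the reason the \emph{conclusion} of Theorem \ref{Symplectic} is weaker than that of Theorem \ref{Main}, not an obstruction to pasting. The local model is also more concrete than an ad hoc ``symplectic Dolgopyat--Pesin construction'': in Darboux coordinates one writes the periodic disk as $D^2\times\cdots\times D^2$ and takes the $d$-fold product $h\times\cdots\times h$ of the Katok disk map, which preserves the standard form and has no zero exponents, and one distributes its factorization into near-identity maps along the long periodic orbit exactly as in Proposition \ref{Prop}. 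Second, the Corollary asserts positive \emph{metric entropy}, so you still need to pass from ``nonzero exponents on a positive-volume set'' to $h_m(g)>0$; in the construction this comes from the fact that the perturbed map leaves invariant a positive-volume disk on which it is conjugate to the (ergodic, Bernoulli, positive-entropy) Katok product, and this step should be stated explicitly.
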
\smallskip
 
\section{Preliminary}
 \subsection{Dominated splitting and elliptic periodic points}
 As mentioned in the introduction,  to say something about the perturbation of  the systems with no domination, we shall use a dichotomy between the existence of dominated splitting and the existence of elliptic periodic points in this section.  
\begin{Def}A $Df-$invariant splitting $TM=E^1\oplus\cdots\oplus E^k$ is called
a {\em dominated splitting} if each $E^i$ is a continuous
$Df-$invariant subbundle of $TM$ and if there is some integer $n>0$
such that, for any $x\in M$, any $i<j$ and any non-zero vectors
$u\in E^i(x)$ and $\nu\in E^j(x)$, one has
$$\frac{\|Df^n(u)\|}{\|u\|}<\frac{1}{2}\frac{\|Df^n(\nu)\|}{\|\nu\|}.$$
 A map $f$ is called {\em partially hyperbolic} if there exists a dominated splitting $TM
= E^u\oplus E^c\oplus E^s$, into nonzero bundles such that,
for some Riemannian metric $\|\cdot\|$ on M, we have $$\|(Df
|E^u(x))^{-1}\|^{-1} > \|Df |E^c(x)\|\geq\|(Df |E^c(x))^{-1}\|^{-1}
> \|Df |E^s(x)\|$$ for every $x\in M$, where $E^s$ and $E^u$ denote the strong expanding and the strong contracting
invariant bundles, respectively. Such a splitting is automatically
continuous, 
\end{Def}
  \begin{Def} If for a periodic point $p$ of period $P(p)$, the tangent map $Df^{P(p)}(p)=Id$,
then we say that $p$ is {\em an elliptic periodic point}. 
\end{Def}

 Newhouse (\cite{N}) started  the question that when can we get elliptic periodic points for conservative diffeomorphisms.
 He  tacked with the conservative diffeomorphisms on surface and proved that for $C^1$ generic conservative diffeomoprhisms on surface,  either they are Anosov or have a dense set of elliptic periodic points. 
  Then, Sagin and Xia (\cite{SX}) proved that for $C^1$ generic symplectic diffeomorphisms, either they are partially hyperbolic or they have a dense set of elliptic periodic points (higher dimension case).   
 \begin{Lem}[Theorem 1, \cite{SX}]\label{LemSX}
There exists an open dense subset $\mathcal{U}$ of $\mathcal{S}ym^1_{\omega}(M)$ such that any diffeomorphisms
in $\mathcal{U}$ is either partially hyperbolic or it has an elliptic periodic point. There
exists a residual subset $\mathcal{R}$ of $\mathcal{S}ym^1_{\omega}(M)$ such that any diffeomorphisms in $\mathcal{R}$ is either partially
hyperbolic or the set of elliptic periodic points is dense on the manifold.
\end{Lem}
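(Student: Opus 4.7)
Since partial hyperbolicity is a $C^1$-open condition, the set $\mathcal{PH}$ of partially hyperbolic symplectic diffeomorphisms is open in $\mathcal{S}ym^1_\omega(M)$, so the task is to approximate any $f\notin\mathcal{PH}$ by a symplectic diffeomorphism possessing an elliptic periodic orbit, and to do so while placing the orbit inside any prescribed open subset of $M$. Once these two features are secured, the open-dense statement and the residual statement both follow from a standard Baire argument over a countable basis of $M$.

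The construction of the elliptic orbit proceeds in two perturbative steps, both resting on the symplectic version of Franks' lemma. First, since $f$ is not partially hyperbolic, no dominated splitting of the appropriate symplectic index exists on the closure of the periodic set; the Bonatti--Diaz--Pujals dichotomy adapted to $Sp(2d,\RR)$ then provides, in any $C^1$-neighborhood of $f$, a map $g_1$ with a periodic orbit $\mathcal{O}$ along which the linear cocycle $Dg_1|_{\mathcal{O}}\subset Sp(2d,\RR)$ admits no dominated splitting. Second, a Mane--Bochi linear algebra lemma specialized to $Sp(2d,\RR)$ lets one perturb this cocycle in $C^0$-norm to a new symplectic cocycle whose period-$P(p)$ product is exactly the identity; the symplectic Franks' lemma realizes this perturbation of cocycles as a genuine $C^1$-small perturbation $g_2$ of $g_1$. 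By supporting both perturbations in an arbitrarily small neighborhood of $\mathcal{O}$, and by first applying the $C^1$-closing lemma to a recurrent point of $f$ lying in a prescribed open set $U\subset M$, one obtains $g_2$ in any $C^1$-neighborhood of $f$ carrying an elliptic periodic orbit inside $U$.

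The open-dense set $\mathcal{U}$ is then taken to be $\mathcal{PH}$ together with the $C^1$-interior of the set of diffeomorphisms possessing an elliptic periodic orbit; the previous paragraph shows $\mathcal{U}$ is dense. For the residual statement, fix a countable basis $\{U_n\}$ of open sets of $M$, and let $\mathcal{E}_n$ be the set of $f$ lying in $\mathcal{PH}$ or carrying an elliptic periodic orbit that meets $U_n$. The localized construction shows each $\mathcal{E}_n$ contains an open dense subset, so $\mathcal{R}:=\bigcap_n\mathcal{E}_n$ is residual, and every $f\in\mathcal{R}$ is either partially hyperbolic or has elliptic orbits meeting every $U_n$, hence a dense set of them.

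The principal obstacle is the second perturbative step. In the volume-preserving setting one can borrow hyperbolicity between the strong stable and strong unstable bundles to collapse a spread spectrum onto the unit circle, but the symplectic symmetry $\lambda\leftrightarrow\lambda^{-1}$ forbids this (as the authors emphasize in the introduction), so the entire maneuver must be carried out internally within $Sp(2d,\RR)$ along the periodic orbit. The core technical content is therefore the symplectic linear algebra claim that a long non-dominated product of symplectic matrices can be $C^0$-perturbed to the identity; once this is in hand, the symplectic Franks' lemma together with Baire category delivers both halves of the lemma.
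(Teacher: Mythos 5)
This lemma is not proved in the paper at all: it is quoted verbatim as Theorem 1 of Saghin--Xia \cite{SX}, so the only meaningful comparison is with their argument. Your outline does follow the Saghin--Xia strategy in broad strokes: openness of partial hyperbolicity, the closing lemma to place a periodic orbit in a prescribed open set, a Bonatti--Diaz--Pujals/Bochi--Ma\~n\'e perturbation of the non-dominated symplectic cocycle realized by a symplectic Franks lemma, and a Baire argument over a countable basis. You are also right that the key constraint is that everything must happen inside $Sp(2d,\RR)$. One ingredient you leave implicit but which is essential is that for symplectic diffeomorphisms a dominated splitting is automatically partially hyperbolic; this is what converts ``not partially hyperbolic'' into ``no dominated splitting over the periodic orbits,'' which is the hypothesis your linear-algebra step actually needs.

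There is, however, a genuine gap in your openness argument. You perturb the cocycle so that the return derivative is exactly the identity and then take the $C^1$-interior of the set of maps possessing such an orbit. But $Dg^{P(p)}(p)=\mathrm{Id}$ is a non-robust condition: an arbitrarily small further symplectic perturbation destroys it, so the interior you invoke may well be empty and density of $\mathcal{U}$ does not follow. The notion that makes the open-dense statement work (and the one Saghin--Xia use) is a \emph{totally elliptic} point: all eigenvalues of $Dg^{P(p)}(p)$ simple, non-real, of modulus one. For symplectic matrices this \emph{is} a $C^1$-robust property, because eigenvalues occur in quadruples $\lambda,\bar\lambda,\lambda^{-1},\bar\lambda^{-1}$, and a simple non-real eigenvalue on the unit circle cannot leave the circle under a small perturbation without violating the multiplicity count. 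So your second perturbative step should aim for a totally elliptic product rather than the identity; correspondingly, the claim that a long non-dominated symplectic product can be $C^0$-perturbed ``to the identity'' is both stronger than needed and stronger than what the mixing-of-directions mechanism directly provides. (Note also that the paper's own Definition 2.2, which takes ``elliptic'' to mean $Df^{P(p)}(p)=\mathrm{Id}$, is not the notion under which the quoted open-dense statement can hold; this mismatch is in the paper, but your proof inherits it.)
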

\medskip
 Herman also considered  the relation between the existence of dominated splitting and the existence of elliptic periodic points in \cite{H}:
 \begin{Con}[Herman]\label{H}\cite{H} Let $f\in \Diff^1_m(M)$ be a conservative diffeomorphism of a compact manifold $M$. Assume that there is a neighborhood  $\mathcal{U}$ of $f$ in $\Diff^1_m(M)$ such that for any $g\in \mathcal{U}$ and every periodic orbit $x$ of $g$ the matrix has at least one eigenvalue of modulus different from one. Then $f$ admits a dominated splitting.  
 \end{Con}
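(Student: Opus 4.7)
The plan is to argue by contraposition. I would try to show that if $f \in \Diff^1_m(M)$ admits no dominated splitting anywhere on $M$, then there exist $C^1$-arbitrarily-small conservative perturbations $g$ of $f$ possessing an elliptic periodic orbit (in the strong sense: all eigenvalues of $Dg^{P(p)}(p)$ on the unit circle). This would directly contradict the hypothesis that the whole neighborhood $\cU$ has periodic points with at least one non-unit-modulus eigenvalue, forcing a dominated splitting for $f$.

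The skeleton would rest on three pieces. First, Pugh's volume-preserving closing lemma together with Mañé's ergodic closing lemma, providing a dense supply of periodic orbits whose derivative cocycles realize the Oseledets behavior of Lyapunov-regular points. Second, a conservative Franks-type perturbation lemma (Bonatti-Diaz-Pujals \cite{BDP} in the dissipative setting, adapted to the volume-preserving category by Arbieto-Matheus and Horita-Tahzibi) which permits, at a prescribed $C^1$ cost, any $SL$ modification of the derivative cocycle along a periodic orbit, realized by a conservative perturbation supported in a tubular neighborhood of the orbit. Third, a Bochi-Mañé style mixing argument showing that, in the absence of a dominated splitting at a given index $i$, one can rotate the Oseledets splitting of the cocycle along a recurrent orbit segment so that successive conservative perturbations shrink the gap between the $i$-th and $(i+1)$-th singular values. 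Iterating this across all indices $1\leq i\leq \dim M-1$ should, in the limit, yield a periodic orbit whose return map is conjugate to a rotation with all eigenvalues of modulus one.

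The main obstacle, and the reason Herman's conjecture remains open, is the conservative constraint on this iterative construction. The determinant-one restriction on linear perturbations forbids independent modification of each stretching factor, so shrinking one singular value toward $1$ forces another to be enlarged, and coupled perturbations along distinct orbit pieces must be arranged to cancel the determinant changes. The successive perturbations then have to be shown to be $C^1$-summable and to continue the same periodic orbit throughout, which is delicate precisely because the eigenvalues being pushed onto the unit circle are borderline hyperbolic and apt to split off or be destroyed by the next perturbation. Packaging these ingredients into a coherent global scheme that respects the volume form on all of $M$ — not merely locally around the orbit being modified — is where I expect any attempt to meet its true difficulty, and it is precisely this step that the results surveyed in this section (notably the BDP dichotomy, applied only near isolated periodic orbits or homoclinic classes) have so far been unable to globalize in the conservative category.
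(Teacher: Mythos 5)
This statement is labeled as a \emph{Conjecture} in the paper (Herman's conjecture), and the paper neither proves it nor claims to: it only records that Bonatti--Diaz--Pujals proved it under the additional hypothesis that $f$ is transitive (Theorem~\ref{BDP}, i.e.\ Corollary~0.4 of \cite{BDP}), and then combines that partial result with the $C^1$-genericity of transitivity (Theorem~\ref{BC}) in the proof of the main theorem. So there is no proof in the paper to compare yours against, and your proposal is not a proof either --- as you yourself say explicitly when you write that the conjecture ``remains open.'' A sketch that ends by identifying the step you cannot carry out is an honest research plan, but it is a genuine gap, not a proof.

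That said, your outline correctly reproduces the known strategy and correctly locates the difficulty. The contrapositive formulation, the closing/ergodic-closing lemmas to produce periodic orbits, the conservative Franks-type lemma to realize linear modifications of the derivative cocycle, and the Bochi--Ma\~n\'e mechanism of rotating Oseledets directions where domination fails are exactly the ingredients of the BDP argument. Two remarks on where your sketch and the actual state of the art diverge. First, the transitivity hypothesis in Theorem~\ref{BDP} is not incidental: the direction-mixing step requires returning the orbit to the region where the dominated splitting degenerates, and a dense orbit is what transports the perturbation there; without transitivity one only controls the dynamics on a homoclinic class or a chain-recurrence class, which is why the global conjecture is open. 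Second, for the application in this paper none of this matters --- the authors restrict to generic (hence transitive) $f$ and invoke Theorem~\ref{BDP} as a black box, so the ``globalization in the conservative category'' you worry about is sidestepped rather than solved. If your goal was to supply what the paper actually uses, the correct move is to cite \cite{BDP} and \cite{BC}, not to attempt the full conjecture.
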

 Now let's state the dichotomy by Bonatti, Diaz and Pujals (\cite{BDP}). They proved Conjecture \ref{H}  under the assumption that $f$ is transitive. 
 
 \begin{Thm}[Corollary 0.4, \cite{BDP}]\label{BDP}Let $f\in \Diff^1_m(M)$ be a conservative transitive diffeomorphism of an $N$-dimensional manifold $M$. Then there is $l\in\mathbb{N}$ such that. 
 \begin{enumerate}
 \item either there is a conservative $\delta$ $C^1$ perturbation $g$ of $f$ having a periodic point $x$ of period $n\in\mathbb{N}$ such that $Dg^n(x)=Id$. 
 
 \item or $f$ admits a dominated splitting. 
 \end{enumerate}
 \end{Thm}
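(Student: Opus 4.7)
The plan is to prove the dichotomy by contrapositive: fix a $C^1$ neighborhood $\mathcal U \subset \Diff^1_m(M)$ of $f$ of radius $\delta$, assume alternative (1) fails inside $\mathcal U$ (i.e.\ no $g\in\mathcal U$ admits a periodic point $x$ with $Dg^{P(x)}(x)=\mathrm{Id}$), and construct a dominated splitting for $f$. Three classical tools will be combined: a volume-preserving version of Franks' lemma, Pugh's $C^1$ conservative closing lemma (together with the transitivity of $f$) to produce dense periodic orbits for every nearby system, and a quantitative analysis of the $SL(N,\mathbb R)$-cocycle $Df$ along long periodic orbits, in the spirit of Ma\~n\'e and Bochi.

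First I would reduce the statement to periodic data: since $f$ is transitive, the closing lemma makes periodic orbits of $C^1$-small conservative perturbations of $f$ dense in $M$, so it is enough to exhibit a uniform $\ell$-step dominated splitting over the union of periodic orbits of all $g\in\mathcal U$; a standard continuity/continuation argument then extends this splitting to a continuous $Df$-invariant splitting on the whole manifold. Next I would invoke the conservative Franks' lemma: along any periodic orbit of period $n$ of some $g\in\mathcal U$, any modification of the derivative sequence $\bigl(Dg(g^{i}(x))\bigr)_{0\le i<n}$ by volume-preserving linear maps of sufficiently small norm can be realized as the derivative cocycle of some $h\in\mathcal U$ having the same periodic orbit.

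The main obstacle, and the heart of the Bonatti--D\'iaz--Pujals argument, is the following quantitative linear-algebra statement: there exists $\ell=\ell(\delta,N)\in\mathbb N$ such that if a periodic orbit $\mathcal O$ of period $n$ of some $g\in\mathcal U$ carries no $\ell$-step dominated splitting, then by concatenating small conservative perturbations along $\mathcal O$ — first rotating pairs of eigendirections whose eigenvalue moduli differ by less than the threshold set by $\ell$, then using the path-connectedness of $SL(N,\mathbb R)$ to further flatten the spectrum — one can deform $Dg^{n}(x)$ continuously inside $SL(N,\mathbb R)$ down to $\mathrm{Id}$, directly contradicting the failure of alternative (1). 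The delicate point is to calibrate $\ell$ against the available perturbation budget, of order $\delta/n$ per iterate, so that the small conservative perturbations accumulate into a total change of the cocycle that actually reaches the identity while the composite diffeomorphism still lies in $\mathcal U$; the $\ell$-step lack of domination provides exactly the room to realign eigenspaces by these small perturbations, while volume preservation restricts the allowed moves to $\mathfrak{sl}(N,\mathbb R)$-valued adjustments. Granted this step, every long periodic orbit of every $g\in\mathcal U$ admits a uniform $\ell$-step dominated splitting, and the reduction described in the second paragraph promotes this to a dominated splitting of $f$ on all of $M$, completing the proof.
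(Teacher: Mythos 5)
This statement is not proved in the paper at all: it is quoted verbatim as Corollary~0.4 of Bonatti--D\'iaz--Pujals \cite{BDP} and used as a black box, so there is no internal proof to compare your attempt against. Judged as a reconstruction of the BDP argument, your overall architecture (contrapositive, closing lemma plus transitivity to make periodic orbits dense, a conservative Franks-type lemma to realize perturbations of the derivative cocycle, a quantitative linear-algebra step along periodic orbits, and the standard passage from uniform $\ell$-domination over periodic orbits to a dominated splitting on $M$) is the right skeleton.

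However, the step you yourself identify as the heart of the matter is stated in a form that is not true and is not what \cite{BDP} prove. You claim that if a \emph{single} periodic orbit $\mathcal O$ of period $n$ carries no $\ell$-dominated splitting, then small perturbations supported along $\mathcal O$ alone deform $Dg^{n}(x)$ to the identity. Lack of $\ell$-domination is a statement about \emph{some} index of the splitting; the orbit can simultaneously have another, strongly dominated index with eigenvalue moduli separated by a factor exponential in $n$, and no perturbation of size $O(\delta)$ per iterate along that one orbit can merge those eigenvalues. Your proposed fix ("rotating pairs of eigendirections whose eigenvalue moduli differ by less than the threshold") only handles eigenvalues that are already close, which is exactly the case where nothing needs to be done. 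The actual BDP mechanism uses transitivity a second time, beyond the closing lemma: periodic orbits are organized into a linear system \emph{with transitions}, and the collapse of the spectrum is an induction on the bundles of the finest dominated splitting in which one concatenates orbit segments (via transitions) to manufacture \emph{new} periodic pseudo-orbits along which the non-dominated indices can be exploited repeatedly until all eigenvalue moduli are equalized; only then does volume preservation force all moduli to be $1$, and only then does the path-connectedness/large-period argument bring the return map to the identity. A second, smaller gap is that the conservative Franks lemma in dimension $\ge 3$ is itself a nontrivial ingredient (realizing prescribed volume-preserving linear perturbations of the cocycle by conservative diffeomorphisms requires a Moser/Dacorogna--Moser type local argument), and cannot simply be cited as "classical" in the same breath as the dissipative version.
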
 
 
  Thus for transitive conservative diffeomorphisms $f$,  no dominated splitting implies the existence of elliptic periodic points for some conservative diffeomorphisms $g\in \Diff^1_m(M)$ that are arbitrarily close to $f$.    
  On the other hand, transitivity is prevalence in conservative systems. Bonatti and Crovisier proved (\cite{BC}) that topological transitivity (i.e., existence of a dense orbit) is a generic property in $\Diff^1_m(M)$ (the topological mixing also holds (\cite{AC})).  
  \begin{Thm}\cite{BC}\label{BC}There exists a $C^1$ generic subset $R\subset \Diff^1_m(M)$ such that for any diffeomorphism $f\in R$, $f$ is topologically transitive. 
  \end{Thm}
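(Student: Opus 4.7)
The plan is a standard Baire-category argument whose heart is a conservative version of Hayashi's connecting lemma. Fix once and for all a countable basis $\{U_i\}_{i\geq 1}$ of open sets of $M$. For each pair $(i,j)\in\NN\times\NN$ set
$$\cR_{i,j}=\{f\in\Diff^1_m(M):f^N(U_i)\cap U_j\neq\emptyset\text{ for some }N\geq 1\}.$$
Each $\cR_{i,j}$ is visibly open in the $C^1$ topology, and a diffeomorphism $f$ is topologically transitive precisely when it lies in $\bigcap_{i,j}\cR_{i,j}$, because this intersection is exactly the condition that some orbit of $f$ meets every basic open set. By the Baire category theorem, it therefore suffices to prove each $\cR_{i,j}$ is $C^1$-dense in $\Diff^1_m(M)$; the corresponding countable intersection is then the sought residual set $R$.

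To check density, fix $f\in\Diff^1_m(M)$ and a $C^1$ neighborhood $\cU$ of $f$. Since $f$ preserves the volume $m$, Poincaré recurrence guarantees that $m$-a.e.\ point of $M$ is recurrent under $f$; pick recurrent points $x\in U_i$ and $y\in U_j$. Conservativity also forces the non-wandering set of $f$ to equal $M$, so standard shadowing-type arguments produce, for every $\varepsilon>0$, a finite $\varepsilon$-pseudo-orbit running from $x$ to $y$. I would then invoke the conservative $C^1$ connecting lemma of Arnaud--Bonatti--Crovisier to produce a conservative perturbation $g\in\cU$ whose orbit structure genuinely realizes this pseudo-orbit: some forward iterate of $g$ carries a point of $U_i$ into $U_j$. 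This gives $g\in\cR_{i,j}\cap\cU$, establishing density.

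The main obstacle, and the only nontrivial input, is the conservative connecting lemma itself. Hayashi's original statement is purely dissipative: one cuts one orbit segment off from another by a small $C^1$ bump, which generally destroys volume preservation. Turning it into a statement inside $\Diff^1_m(M)$ requires working in Darboux-type charts along the pseudo-orbit and gluing each cut-and-paste modification against a compensating divergence-free perturbation, all while keeping the $C^1$ size under control. Once this technical ingredient is available, the rest of the proof is automatic, and in fact one obtains more (e.g.\ topological mixing, as remarked in the text).
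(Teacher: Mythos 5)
Your Baire-category skeleton is exactly the one used for this result (which the paper only cites, to Bonatti--Crovisier): the sets $\cR_{i,j}$ are open, their intersection characterizes transitivity, and density of each $\cR_{i,j}$ is reduced, via volume preservation, to connecting a point of $U_i$ to a point of $U_j$. Two remarks on the reduction itself: the existence of $\varepsilon$-pseudo-orbits from $x$ to $y$ is not a ``shadowing-type'' fact but a chain-transitivity fact --- the set of points attainable from $x$ by $\varepsilon$-chains is open, contains the $\varepsilon$-neighborhood of its image, and by volume preservation is therefore closed, hence all of the connected manifold $M$. That part is fine.

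The genuine gap is in the step ``invoke the conservative connecting lemma to produce $g$ whose orbit realizes this pseudo-orbit,'' and in your diagnosis of where the difficulty lies. You present the obstacle as making Hayashi's lemma divergence-free; but a conservative $C^1$ connecting lemma in Hayashi's sense was already available (Arnaud; Wen--Xia), and it does \emph{not} suffice here. Hayashi's lemma connects two \emph{genuine orbits} under the hypothesis that the positive orbit of one accumulates on a point of the negative orbit of the other; an $\varepsilon$-pseudo-orbit from $U_i$ to $U_j$ provides no such accumulation, only a long chain with many small jumps, and for arbitrary $x\in U_i$, $y\in U_j$ there is in general no common accumulation point to feed into Hayashi's statement. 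Turning an arbitrary pseudo-orbit into a true orbit by a $C^1$-small perturbation requires performing perturbations in many boxes simultaneously, controlling the returns of the modified orbit to each box and the interference between boxes; this is precisely the \emph{connecting lemma for pseudo-orbits}, which is the main theorem of \cite{BC} (extended to the symplectic case by Arnaud--Bonatti--Crovisier) and constitutes essentially the entire content of the result you are asked to prove. As written, your argument either assumes that theorem as a black box (in which case the ``proof'' is circular with respect to the cited source) or proposes to derive it by adapting Hayashi's cut-and-paste, which fails for the reason above.
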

   \subsection{Pasting lemma} 
   Starting with two conservative (or symplectic) diffeomorphisms (or flows), on $M$ and on some open set $V$ of $M$ respectively, we can ``paste" them together by pasting lemma and get a new conservative (symplectic) system. This new map coincide with one given map in an open set $U\subset V$  and with the other outside $V$.  Here we refer to a pasting lemma  for symplectic diffeomorphisms in \cite{AM}.
   \begin{Lem}[Lemma 3.9, \cite{AM}]\label{pasting symplectic}If $f$ is a $C^r$-symplectic diffeomorphism ($r \geq 1$) with $p$ as a periodic point  and $g$ is a local diffeomorphism ($C^r$-close to $f$) defined in a small neighborhood $V$ of the $f$-orbit of $p$, then there exists a $C^r$-symplectic diffeomorphism $h$ ($C^r$-close to $f$) and some neighborhood $U\subset V$ of $p$ satisfying $h\vert U= g$ and $g\vert V^c = f$ .   \end{Lem}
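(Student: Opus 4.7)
The strategy is to reduce the pasting problem to a local deformation in Darboux coordinates and to realise the desired interpolation as the time-one map of a Hamiltonian vector field cut off by a bump function.

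First, since the orbit $\Orb(p) = \{p, f(p), \dots, f^{P(p)-1}(p)\}$ is finite, I would choose pairwise disjoint Darboux charts around each point $f^i(p)$, so that in these charts the symplectic form becomes the standard $\omega_0 = \sum dx_j \wedge dy_j$ on an open subset of $\RR^{2d}$. After shrinking $V$ if necessary, all subsequent constructions can be performed chart by chart, so I work in a single chart containing one point $q = f^i(p)$. Let $\varphi := g \circ f^{-1}$, which is a symplectic local diffeomorphism defined near $f(q)$ and, by hypothesis, $C^r$-close to the identity.

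Second, because $\varphi$ is symplectic and sufficiently $C^1$-close to the identity, on a small ball it is exact and can be represented as the time-one map $\varphi = \Phi_H^1$ of a compactly time-dependent Hamiltonian $H : \RR^{2d} \times [0,1] \to \RR$ whose $C^r$-norm is controlled by the $C^r$-distance from $\varphi$ to $\id$. This is the usual local generating-function/Lie-theoretic statement that the exponential map from Hamiltonian vector fields to symplectomorphisms is a local homeomorphism in $C^r$. Next I would fix smooth bump functions $\rho_i$, each identically $1$ on a neighbourhood $U_i$ of $f^i(p)$ and supported in a slightly larger neighbourhood contained in $V$, set $\widetilde H := \rho_i H$ on the $i$-th chart, and define $\widetilde\varphi := \Phi_{\widetilde H}^1$. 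By construction $\widetilde\varphi$ is symplectic (it is a Hamiltonian flow), coincides with $\varphi$ on $U_i$ (for $H$ small enough the integral curves stay inside $\{\rho_i \equiv 1\}$), and equals the identity outside $V$. Putting $h := \widetilde\varphi \circ f$ yields a globally defined $C^r$-symplectic diffeomorphism with $h|_{U} = g$ on $U := \bigcup_i U_i$ and $h|_{V^c} = f$.

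The main obstacle is the quantitative $C^r$-control of $h$. The $C^r$-norm of $\widetilde H = \rho_i H$ is bounded by $\|\rho_i\|_{C^r}\, \|H\|_{C^r}$, and the bump-function norm $\|\rho_i\|_{C^r}$ inevitably grows as the collar $\supp \rho_i \setminus U_i$ is made thinner. The correct order of quantifiers is therefore: first fix the neighbourhoods $U_i \Subset \supp\rho_i \Subset V$ (hence fix $\|\rho_i\|_{C^r}$), and only afterwards require $g$ to be $C^r$-close enough to $f$ so that $\|H\|_{C^r}$, and in turn $\|\widetilde\varphi - \id\|_{C^r}$, is as small as desired. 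Combined with a Grönwall-type estimate for the flow $\Phi_{\widetilde H}^t$, this furnishes the required $C^r$-closeness of $h$ to $f$ and completes the construction.
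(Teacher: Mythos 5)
This lemma is not proved in the paper at all: it is quoted verbatim from Arbieto--Matheus (Lemma 3.9 of \cite{AM}), and the authors only remark that the proof there ``is established based on the generated functions which are defined locally.'' So the comparison is really with \cite{AM}. Your route is correct but genuinely different in mechanism. In \cite{AM} one writes $g\circ f^{-1}$ (near each orbit point, in Darboux coordinates) through its generating function, interpolates that generating function with the generating function of the identity by a bump function, and reads off the pasted symplectomorphism; the symplectic condition is automatic because anything defined by a generating function is symplectic, and the $C^r$ estimate is immediate because the interpolation is algebraic (no flow is integrated). You instead realise $g\circ f^{-1}$ as the time-one map of a local Hamiltonian isotopy and cut off the Hamiltonian. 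What your version buys is that the cut-off object is manifestly symplectic and manifestly supported where you want it; what it costs is that the one step you wave at --- ``a symplectomorphism $C^r$-close to the identity on a ball is $\Phi^1_H$ for an $H$ with $\|H\|_{C^{r+1}}$ controlled by $\|\varphi-\id\|_{C^r}$'' --- is itself normally proved by exactly the generating-function device of \cite{AM}, plus exactness of closed $1$-forms on a ball, so the two arguments meet at the same technical core. Your quantifier ordering (fix the bump functions and collars first, then shrink $\|g-f\|_{C^r}$) is the right one and is also what \cite{AM} must do. Two bookkeeping points you should repair in a written version: with $\varphi=g\circ f^{-1}$ defined near $f(q)$ and $h=\widetilde\varphi\circ f$, the cutoff $\rho_i$ must be $\equiv 1$ on $f(U_i)$ (a neighbourhood of $f^{i+1}(p)$), not on $U_i$; and to get $h|_{V^c}=f$ you need $\supp\rho_i\subset f(V)$ for every $i$, which holds after shrinking the supports since the orbit is $f$-invariant. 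Note also that the hypothesis must implicitly require $g$ to be symplectic on $V$ (otherwise $h|_U=g$ is impossible); you use this when you assert $\varphi$ is symplectic, and it is indeed part of the hypothesis in \cite{AM}.
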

 In this paper, we need not only Lemma \ref{pasting symplectic} but also a conservative version of the Pasting Lemma. First we refer to a result by Dacorogna and Moser (\cite{DM}, Theorem 1), which will play a crucial role in the proof of  Pasting Lemma \ref{pasting conservative2}. 
 \begin{Lem}[\cite{DM}, Theorem 1]\label{Equation} Let $\omega$ be a compact manifold with $C^{\infty}$ boundary. Let $f, g\in C^{r}(\Omega), r>0$ be such that $f, g > 0$. Then there exists a $C^{r+1}$ diffeomorphism $\phi$ (with the same regularity at the boundary), such that,
 \[\begin{cases}g(\phi(x))\det(D\phi(x))= \lambda f(x), & x \in \Omega\\
 \phi(x) = x, & x \in \partial\Omega
 \end{cases}\]
where $\lambda= \frac{\int g }{\int f}.$ 
 Furthermore, there exists $C=C(r,\Omega)>0$ such that $\|\phi-Id\|_{C^{r+1}}\leq C\|f-g\|_{C^{r}}$. Also if $f, g$ are $C^{\infty}$, then $\phi$ is $C^{\infty}$.
  \end{Lem}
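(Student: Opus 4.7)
The plan is to apply Moser's homotopy trick on the space of volume densities. First I would normalize by replacing $f$ with $\tilde f=\lambda f$, so that $\int_\Omega \tilde f=\int_\Omega g$, and seek a diffeomorphism $\phi$, equal to the identity on $\partial\Omega$, satisfying $\phi^*(g\,dx)=\tilde f\,dx$, i.e.\ $g(\phi)\det(D\phi)=\tilde f$. The rescaling just ensures equal total masses, which is the only global obstruction to transporting one top-form to another.

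Next I would interpolate: set $\rho_t=(1-t)\tilde f+tg$ for $t\in[0,1]$, so $\rho_0=\tilde f$, $\rho_1=g$, $\int_\Omega\rho_t$ is independent of $t$, and $\rho_t$ is bounded away from zero uniformly in $t$ by compactness of $\Omega$ and positivity of $f,g$. I would look for a time-dependent family $\{\phi_t\}_{t\in[0,1]}$ of diffeomorphisms fixing $\partial\Omega$ with $\phi_0=\mathrm{id}$ and $\phi_t^*(\rho_t\,dx)=\tilde f\,dx$, and take $\phi=\phi_1$. Writing $\dot\phi_t=X_t\circ\phi_t$, differentiation in $t$ turns the transport condition into
\[
\mathrm{div}(\rho_t X_t)=\tilde f-g \quad\text{on }\Omega,\qquad X_t\big|_{\partial\Omega}=0,
\]
whose solvability is compatible precisely because the right-hand side has zero mean.

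The main obstacle will be to solve the divergence equation $\mathrm{div}(Y)=\tilde f-g$ with $Y=0$ on $\partial\Omega$ and with the \emph{regularity gain} $\|Y\|_{C^{r+1}}\leq C\|\tilde f-g\|_{C^r}$. The natural route is to let $u$ solve the Neumann problem $\Delta u=\tilde f-g$ with $\partial_\nu u=0$; Schauder estimates in $C^{r+1,\alpha}$-spaces give $u\in C^{r+2}$ with a linear bound in $\|\tilde f-g\|_{C^r}$, and then $Y_0=\nabla u$ has the prescribed divergence but generally only the normal component vanishes on $\partial\Omega$. The delicate step is to subtract from $Y_0$ a divergence-free correction supported in a collar of $\partial\Omega$ killing the residual tangential trace, without destroying the linear estimate — this is the tangential-extension argument underlying the Dacorogna--Moser theorem.

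With such a $Y$ in hand, set $X_t=Y/\rho_t$, which inherits the $C^{r+1}$ estimate since $\rho_t$ is bounded away from $0$ with $C^r$ control; integrate the non-autonomous ODE $\dot\phi_t=X_t\circ\phi_t$ on $[0,1]$, observing that $X_t$ vanishes on $\partial\Omega$ so the flow fixes the boundary. The flow exists on all of $[0,1]$ by the smallness of $X_t$ (after possibly assuming $\|f-g\|_{C^r}$ is small, and otherwise by a finite concatenation of small steps), and a Gronwall-type estimate together with the bound on $Y$ yields $\|\phi-\mathrm{id}\|_{C^{r+1}}\leq C\|f-g\|_{C^r}$. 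The $C^\infty$ statement follows by applying the $C^r$ conclusion at every finite $r$, since the construction of $Y$ through the Neumann Laplacian is canonical.
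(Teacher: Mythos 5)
First, note that the paper does not prove this statement at all: it is quoted verbatim as Theorem~1 of Dacorogna--Moser \cite{DM} and used as a black box, so the only meaningful comparison is between your sketch and the argument in \cite{DM}.

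Your proposal is the classical Moser deformation method, and it has a genuine gap precisely at the point where the lemma's content lies, namely the gain of one derivative. After you solve $\operatorname{div} Y=\tilde f-g$ with $\|Y\|_{C^{r+1}}\leq C\|\tilde f-g\|_{C^{r}}$, you set $X_t=Y/\rho_t$ with $\rho_t=(1-t)\tilde f+tg$. But $\rho_t$ is only $C^{r}$, so the quotient $X_t$ is only $C^{r}$, and the flow of a (time-dependent) $C^{r}$ vector field is only a $C^{r}$ diffeomorphism. The flow method therefore produces $\phi\in C^{r}$ with an estimate in the $C^{r}$ norm, one derivative short of the assertion $\phi\in C^{r+1}$ with $\|\phi-\operatorname{Id}\|_{C^{r+1}}\leq C\|f-g\|_{C^{r}}$; Gronwall cannot recover the lost derivative. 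This is exactly why Dacorogna and Moser do \emph{not} prove the sharp statement by the flow: they first establish the linear divergence estimate (your Neumann-plus-tangential-correction step, which is fine as a sketch), then solve the nonlinear equation $g(\phi)\det D\phi=\lambda f$ directly by a Banach fixed-point/iteration argument in $C^{r+1,\alpha}$ when $\|f-g\|_{C^{r}}$ is small, and finally reduce the general case to the small one by a finite chain of interpolating densities. The flow method appears in \cite{DM} only as a separate, lower-regularity proof. A secondary point: the Schauder theory you invoke requires H\"older classes, so the estimate is really a $C^{r,\alpha}\to C^{r+1,\alpha}$ statement (equivalently non-integer $r$); in pure integer-order $C^{k}$ norms the elliptic estimate, and hence the lemma as you would prove it, fails.
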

  Lemma \ref{pasting symplectic}, the pasting lemma for symplectic diffeomorphism,  is established based on the generated functions which are defined  locally.   Since we do not have generated function for conservative diffeomorphisms,  the proof of the pasting lemma for the conservative diffeomorphisms should be different from the proof for the symplectic case.  Lemma \ref{pasting conservative2} is similar to Theorem 3.6 in \cite{AM} with the interpolation of $g$ by $Df$ and the interpolation of the $C^1$ distance by the $C^r$ distance. 
 \begin{Lem}[Pasting Lemma for volume-preserving diffeomorphisms]\label{pasting conservative2}If $f$ is a $C^r$-conservative diffeomorphism ($r>1$) on a compact Riemannian manifold $M$ and $g$ is a $C^r$ local diffeomorphism ($C^1$-close to $f$) defined in a small neighborhood $V\subset M$ of  a point $x\in M$, then there exists a $C^r$-conservative diffeomorphism $G$ ($C^1$-close to $f$) and some neighborhood $U\subset V$ of $x$ satisfying $G\vert U = g$ and $G\vert V^c = f.$ 
 \end{Lem}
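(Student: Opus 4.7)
The plan is to first produce an auxiliary $C^r$ diffeomorphism $\tilde G$ which agrees with $g$ on a small neighborhood $U\Subset V$ of $x$ and with $f$ outside $V$, via a bump-function interpolation, and then to correct $\tilde G$ to a genuinely $m$-preserving $G$ by composing it with a diffeomorphism supported in the annulus $\Omega := V\setminus\overline U$ obtained from the Dacorogna--Moser equation (Lemma \ref{Equation}). Concretely, I would fix $U\Subset V$ inside a single chart around $x$ together with a smooth bump function $\rho\colon M\to[0,1]$ with $\rho\equiv 1$ on $U$ and $\supp\rho\subset V$, and set, in the chart,
$$\tilde G(y) \;:=\; f(y) + \rho(y)\bigl(g(y)-f(y)\bigr).$$
If $g$ is sufficiently $C^1$-close to $f$ (with respect to the fixed $\rho$), then $\tilde G$ is a $C^r$ diffeomorphism of $M$, $C^1$-close to $f$, with $\tilde G|_U = g$ and $\tilde G|_{V^c}=f$.

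Let $J_{\tilde G}$ denote the Jacobian of $\tilde G$ with respect to $m$. Since $f$ and (locally) $g$ are volume-preserving, $J_{\tilde G}\equiv 1$ on $U\cup V^c$, hence in particular on $\partial\Omega$, while on $\Omega$ the function $J_{\tilde G}$ is merely $C^0$-close to $1$. Moreover, because $\tilde G$ is a global diffeomorphism agreeing with volume-preserving maps on $U$ and on $V^c$,
$$\int_\Omega J_{\tilde G}\,dm \;=\; m\bigl(\tilde G(\Omega)\bigr) \;=\; m(M)-m\bigl(\tilde G(U)\bigr)-m\bigl(\tilde G(V^c)\bigr) \;=\; m(\Omega).$$
Applying Lemma \ref{Equation} on $\overline\Omega$ with the pair of positive functions $1$ and $J_{\tilde G}$ therefore produces $\lambda=1$ and a $C^r$ diffeomorphism $\phi\colon\overline\Omega\to\overline\Omega$ satisfying $\phi|_{\partial\Omega}=\id$ and $J_{\tilde G}(\phi(y))\det D\phi(y)=1$ on $\Omega$. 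Extending $\phi$ by the identity outside $\Omega$ and setting $G:=\tilde G\circ\phi$, a pointwise change of variables shows $\det DG\equiv 1$; together with $G=g$ on $U$ and $G=f$ on $V^c$, this gives $G\in\Diff^r_m(M)$ as required.

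The remaining point, which I expect to be the main obstacle, is the $C^1$-closeness of $G$ to $f$. The bound $\|\phi-\id\|_{C^{r+1}}\le C\|J_{\tilde G}-1\|_{C^r}$ from Lemma \ref{Equation} is much stronger than needed: the same Dacorogna--Moser construction yields $\|\phi-\id\|_{C^1}$ arbitrarily small once $\|J_{\tilde G}-1\|_{C^0}$ is small, which holds when $g$ is $C^1$-close to $f$. The more delicate issue is the quantitative control of $\tilde G$ itself, because $D\tilde G - Df$ contains a term of the form $(g-f)\otimes D\rho$ in which $|D\rho|$ is of order $1/\diam V$; once $U$, $V$ and $\rho$ have been fixed one must tighten the assumption ``$g$ is $C^1$-close to $f$'' so that $\|g-f\|_{C^0}\ll\diam V$. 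This coupling between the size of $V$ and the size of the perturbation is standard in pasting arguments but is the step at which the quantitative calibration must be done carefully.
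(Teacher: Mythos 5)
Your proposal follows essentially the same route as the paper's proof: a bump-function interpolation $h=\rho g+(1-\rho)f$ in a chart, followed by a Dacorogna--Moser correction (Lemma \ref{Equation}) supported on the annulus $\Omega$, with the same quantitative caveat that $|D\rho|\sim 1/r_0$ forces the smallness of $\|g-f\|_{C^1}$ to be calibrated to the size of $V$. The only (welcome) refinements are your explicit verification that the normalization constant $\lambda$ equals $1$ via $\int_\Omega J_{\tilde G}\,dm=m(\Omega)$ -- a point the paper's proof passes over -- and the composition order $G=\tilde G\circ\phi$ with $\phi$ acting first, which makes $G|_U=g$ immediate, whereas the paper writes $G=\xi\circ h$.
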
  
 \begin{proof} As long as the neighborhood $V$ is small enough, there exists a local chart $\psi: V\rightarrow B(0,r_0)$ at $x$. With respect to this coordinate, we have the local map 
 $\psi \circ f\circ \psi^{-1}: B(0,r_0)\rightarrow \mathbb{R}^{n}$, where $n$ is the dimension of $M$,  with $\psi\circ f\circ \psi^{-1}(0)=0$. 
  
  Now let us consider a $C^1$ perturbation $h$ of $f$ associated with a $C^r$ bump function. Denote by $U=\psi^{-1}(B(0,r_0))$ and  $h(y) = \rho(y)g(y)+(1-\rho(y))f(y)$ (in local charts), where $\rho$ is a $C^{r}$ bump function such that $\rho\vert \psi^{-1}(B(0,r_0/2))=1, \rho\vert \psi^{-1}( B(0,r_0)^c)=0$ and $|\nabla\rho|\leq \frac{C}{r_0}$.  Now, we note that $\|h-f\|_{C^1}\leq C\cdot \|f-g\|_{C^1}$, where $C$ is a constant.
  Then if we denote by $\theta(y)=\det Dh(y)$, by the previous calculation, we obtain that $\theta$ is $C\cdot \|f-g\|_{C^1}$-$C^0$-close to $1$. Hence we have the same bound for the function $\hat{\theta}(y) = (\det Dh(h^{-1}(y)))^{-1}$.
  So, applying Lemma \ref{Equation} for the $C^{r-1}$ functions $\hat{\theta}$ and $1$ on the domain $\Omega=\overline{\psi^{-1}(B(0, r_0)\backslash B(0, r_0/2))}$, 
 we obtain that there exists a $C^{r}$  diffeomorphism $\xi$ that is a solution to the equation of Lemma \ref{Equation} which is $C^r$-close to the identity and is regular at the boundary of $\Omega$ where $\xi=Id$. Now observe that $G= \xi \circ h\in \Diff^r(M)$  is $C^1$ close to $f$ provided that $r_1$ is small and $g$ is $C^1$ close to $f$ on $U$. Since $G(y) =g(y)$ (in local charts) in $U$ we have that $\det DG(y)=1$ there, and in $ V^c$ we have $G(y)= f(y)$ so also $\det DG(y)=1$; finally, in $\Omega$ we have that $\det DG(x) = \det D\xi(h(x))Dh(x) = \det D\xi(y)Dh(h^{-1}(y)) = 1$, so $G$ is conservative.  
   Now we complete the proof. 
\end{proof}
   \subsection{Generic robustness of non-uniformly hyperbolic block}
 At the end of this section, we refer to a result saying the robustness of nonuniformly hyperbolic block (formulated in Sublemma 5.2 \cite{AB}). For any conservative diffeomorphism $f$, let $\text{NUH}(f)$ be the set of points without zero Lyapunov exponents. 
   \begin{Lem}\label{Robust block}
 There exists a $C^1$ generic subset $R\in\Diff^1_m(M)$ satisfying the following conditions:  for any $f\in R$, assume $\Lambda\subset \text{NUH}(f)$ to be a positive volume Borel $f$-invariant set with a dominated splitting. Then for any $\varepsilon> 0$,  for any $g\in\Diff^{1}_m(M)$ sufficiently $C^1$-close to $f$ it follows
  $m(\Lambda\backslash \text{NUH}(g))<\varepsilon.$
   \end{Lem}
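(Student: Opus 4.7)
The plan is to cover $\Lambda$ up to measure $\varepsilon/2$ by a compact Pesin-type block on which the nonzero Lyapunov estimates are uniform, and then show that this block together with its estimates persists for every $g$ that is $C^1$-close to $f$. Let $R \subset \Diff^1_m(M)$ be the residual set of $C^1$-continuity points of the Oseledets splitting in the sense of Bochi-Viana, a classical residual subset (intersected, if one wishes, with the transitivity set from Theorem~\ref{BC}).

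Fix $f \in R$ and a Borel $f$-invariant set $\Lambda \subset \text{NUH}(f)$ with $m(\Lambda) > 0$ carrying the dominated splitting $T_\Lambda M = E^1 \oplus \cdots \oplus E^k$. Because the Oseledets decomposition is subordinate to any dominated splitting, at $m$-a.e.\ $x \in \Lambda$ each $E^i(x)$ splits further into a (possibly trivial) stable and unstable Oseledets summand $E^{i,s}(x) \oplus E^{i,u}(x)$, with every exponent nonzero by the hypothesis $\Lambda \subset \text{NUH}(f)$. For $N \in \mathbb{N}$ let $\Lambda_N \subset \Lambda$ be the Pesin block of points $x$ satisfying
\[
\|Df^n|_{E^{i,s}(x)}\| \leq N e^{-n/N}, \qquad \|Df^{-n}|_{E^{i,u}(x)}\| \leq N e^{-n/N} \qquad (n \geq 0,\ 1 \leq i \leq k),
\]
with all angles between Oseledets subbundles at $x$ bounded below by $1/N$. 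By Oseledets's theorem $m\bigl(\Lambda \setminus \bigcup_N \Lambda_N\bigr) = 0$, so I fix $N_0$ with $m(\Lambda \setminus \Lambda_{N_0}) < \varepsilon/2$ and, by inner regularity, a compact $K \subset \Lambda_{N_0}$ with $m(\Lambda_{N_0} \setminus K) < \varepsilon/4$.

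The dominated splitting extends to an open neighborhood of $K$ and persists under $C^1$-small perturbations via the standard invariant cone-field construction: for every $g$ sufficiently $C^1$-close to $f$, there is a dominated splitting on an open set containing $K$ whose sub-bundles are $C^0$-close to those of $f$. The uniform Pesin estimates on $K$ then transfer, because they control $Df^n$ on Oseledets summands living inside forward- or backward-invariant cones; a continuity argument produces analogous estimates for $Dg$ at every $x \in K$ with slightly worsened constants. Combined with $f \in R$, which forces the Oseledets data of $g$ to approximate that of $f$ on a subset $K' \subset K$ of $m$-measure at least $m(K) - \varepsilon/4$, this implies that every $x \in K'$ has all $g$-exponents nonzero, hence $x \in \text{NUH}(g)$. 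Summing the three error contributions gives $m(\Lambda \setminus \text{NUH}(g)) < \varepsilon$.

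The main obstacle is exactly the last transfer step: Lyapunov exponents are only upper semicontinuous, and Bochi-type examples show that they can collapse to zero under arbitrarily small $C^1$ perturbations. Two ingredients must cooperate to rule this collapse out here. The dominated splitting forbids the merging of Oseledets sub-bundles that belong to different blocks $E^i$, so no exponent can drift to zero by absorbing an oppositely-signed exponent from a neighbouring block; the genericity $f \in R$, meanwhile, pins down the spectrum inside each individual block $E^i$, where the dominated splitting alone provides no protection. Both are therefore essential to the robust version of the Pesin block formulated in the lemma.
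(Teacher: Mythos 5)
There is a genuine gap. Note first that the paper does not prove this lemma at all: it is quoted from Avila--Bochi \cite{AB} (Sublemma 5.2), whose proof is a semicontinuity argument on \emph{integrated} Lyapunov exponents, not a pointwise transfer of Pesin-block estimates. Your argument breaks down precisely at the step you yourself flag as "the main obstacle."

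Two concrete problems. First, the transfer of the uniform estimates on the compact block $K$ to the perturbed map $g$ cannot work as described: $K$ is a Pesin block, not an invariant set, so a point $x\in K$ has a $g$-orbit that leaves any neighbourhood of $K$ on which your perturbed cone fields and dominated splitting are defined; the Pesin conditions are conditions on \emph{all} iterates $n\ge 0$ along measurable (not continuous) subbundles, and a $C^1$-small change in the map gives no control over $\|Dg^n|_{E^{i,s}}\|$ for large $n$. Moreover, inside a single block $E^i$ of the dominated splitting the finer decomposition $E^{i,s}\oplus E^{i,u}$ carries no domination, so it admits no robust cone field at all --- this is exactly the Bochi--Ma\~n\'e mechanism by which exponents collapse. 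Second, the sentence "$f\in R$ forces the Oseledets data of $g$ to approximate that of $f$ on a subset $K'\subset K$ of measure at least $m(K)-\varepsilon/4$" is not a consequence of any residual property you have stated; it is essentially the conclusion of the lemma, asserted rather than proved. The known generic continuity (of Bochi--Viana type) is continuity of the functions $g\mapsto \int_M\sum_{i\le k}\lambda_i(g,x)\,dm$, which are upper semicontinuous as infima of continuous functions; passing from continuity of these integrals to an $L^1$ or measure-theoretic statement about the exponent functions themselves is the actual content of \cite{AB}. There one complements the global upper semicontinuity with a \emph{lower} bound on $\int_\Lambda(\text{sum of exponents along the dominated bundle})\,dm$, which is genuinely continuous in $g$ because the dominated splitting persists and the sum of exponents along an invariant bundle is the Birkhoff average of the continuous function $\log\det(Dg|E_g)$; comparing the two inequalities at a continuity point bounds the measure of the set where exponents drop. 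If you want a self-contained proof, you should reproduce that integrated argument rather than attempt an orbitwise transfer of Pesin estimates.
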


 \section*{Acknowledgement}We would like to thank Jairo Bochi, Huyi Hu and Zhihong Xia for their useful suggestions.  The second author also would like to thank Zhihong Xia for providing office to her during Summer, 2015. 
 \section{The proof of Theorem \ref{Main}.}
  Before running into the proof of  Theorem \ref{Main}, we first present two Propositions.  
 \begin{Prop}\label{Prop1}Let $D^n$ be the unit ball in $\mathbb{R}^n$ and $m$ be the volume measure on $D^n$.  Then, for any $\delta>0$ and the  $C^{ r}$ nonuniformly hyperbolic diffeomorphism $f\in \Diff^1_m(D^n)$ constructed in Theorem \ref{DP}, there exists $C^{ r}$ diffeomorphisms $\{f_k\}_{k=1}^N$ on $D^n$  such that: 
 \begin{enumerate}
 \item $f_k$ preserves the volume measure $m$;
 \item  $\| f_k-\text{Id }\|_{C^1}\leq \delta$;
 \item $f=f_N\circ f_{N-1}\circ \cdots\circ f_1$. 
  \end{enumerate}
 \end{Prop}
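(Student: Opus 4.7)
The plan is to build a smooth $C^{r}$ isotopy from the identity to $f$ inside $\Diff^{r}_m(D^n)$ and then discretise it finely. Concretely, suppose we have a one-parameter family $\{\varphi_t\}_{t\in[0,1]}$ of $C^{r}$ volume-preserving diffeomorphisms with $\varphi_0=\id$, $\varphi_1=f$, such that $t\mapsto\varphi_t$ is continuous into $\Diff^{1}_m(D^n)$. Then, for $N$ to be chosen large, set
\[
f_k \;=\; \varphi_{k/N}\circ\varphi_{(k-1)/N}^{-1},\qquad k=1,\ldots,N.
\]
Each $f_k$ is volume-preserving, which gives (1), and telescoping yields $f_N\circ\cdots\circ f_1=\varphi_1\circ\varphi_0^{-1}=f$, which is (3). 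Item (2) follows from uniform continuity of the path: by compactness of $[0,1]$, the family $\{\varphi_t\}$ is uniformly continuous in the $C^1$ norm and $\|D\varphi_t^{-1}\|_{C^0}$ is bounded uniformly in $t$, so choosing $N$ large enough forces $\|f_k-\id\|_{C^1}\le\delta$.

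The main work is therefore the construction of the isotopy $\{\varphi_t\}$. The construction in Theorem \ref{DP} produces $f$ on $D^n$ which is the identity in a neighbourhood of $\partial D^n$, so $f$ is compactly supported in the interior of $D^n$. First I would build a $C^{r}$ path $\psi_t$ of compactly supported $C^{r}$ diffeomorphisms (not yet volume-preserving) joining $\id$ to $f$; on a ball this is standard, for instance via a rescaling/contraction trick $\psi_t(x)=\alpha(t)^{-1}f(\alpha(t)x)$ composed with a linear straight-line interpolation on a compactly supported chart, using that $f$ is isotopic to the identity within compactly supported $C^{r}$ diffeomorphisms of $D^n$. Then apply Moser's theorem (Lemma \ref{Equation}) to the smooth family of volume forms $\omega_t:=\psi_t^{*}m$: since each $\omega_t$ has the same total mass as $m$ and agrees with $m$ near $\partial D^n$, Moser's construction yields a $C^{r}$ family $\eta_t\in\Diff^{r}(D^n)$ with $\eta_0=\id$ and $\eta_t^{*}\omega_t=m$, whence $\varphi_t:=\psi_t\circ\eta_t$ is the desired volume-preserving isotopy with $\varphi_0=\id$, $\varphi_1=f$.

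The hard part is exactly this isotopy step: one must ensure that the Moser correction $\eta_t$ depends smoothly on $t$ in $C^{r}$ and that its $C^{1}$ bound is controlled so that $t\mapsto\varphi_t$ is continuous into $\Diff^{1}_m(D^n)$. Both properties follow from the quantitative form of the Dacorogna--Moser estimate recalled in Lemma \ref{Equation}, since the densities $\det D\psi_t$ vary continuously in $C^{r-1}$ with $t$ and equal $1$ near the boundary. Once this smooth path has been produced, the telescoping argument above finishes the proof.
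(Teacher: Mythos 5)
Your discretization step is sound and is essentially what the paper does: both arguments reduce the proposition to producing a path $\{\varphi_t\}$ in $\Diff^{r}_m(D^n)$ from $\id$ to $f$ that is continuous into $\Diff^1$, and then telescoping $f_k=\varphi_{k/N}\circ\varphi_{(k-1)/N}^{-1}$. The gap is in the construction of that path, which is the entire content of the proposition. You assert that $f$ is smoothly isotopic to the identity through compactly supported diffeomorphisms of the ball "standardly", via the rescaling $\psi_t(x)=\alpha(t)^{-1}f(\alpha(t)x)$ or a straight-line interpolation. Neither works. The Alexander-type rescaling converges to the identity only in $C^0$: on its shrinking support the derivative is $Df(\alpha(t)x)$, which does not converge to $I$, so the path is not continuous into $\Diff^1$ and item (2) --- the only quantitative conclusion of the proposition --- fails for the resulting $f_k$. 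Straight-line interpolation $(1-t)x+tf(x)$ need not even be injective for intermediate $t$ unless $f$ is already $C^1$-close to the identity, which a nonuniformly hyperbolic map is not. More fundamentally, the existence of \emph{any} smooth compactly supported isotopy from $f$ to $\id$ on a ball is not standard: $\pi_0$ of the group of compactly supported diffeomorphisms of $D^n$ can be nontrivial (for $n\ge 5$ it is the group of homotopy $(n+1)$-spheres), so one must exploit the specific structure of the Dolgopyat--Pesin map rather than general position facts about $D^n$.

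That is exactly what the paper does. For the Katok map on $D^2$, near $\partial D^2$ the map is the time-one map of an explicit area-preserving local flow (pushed forward through $\phi_1,\phi_2,\phi_3$); composing with the inverse of the time-one map of an extension of this flow yields a diffeomorphism equal to the identity near $\partial D^2$, and Smale's theorem --- the contractibility, in the $C^r$ topology, of the group of diffeomorphisms of $D^2$ equal to the identity near the boundary, a genuinely two-dimensional fact --- supplies the diffeotopy, which is then made area-preserving and discretized. In higher dimensions the paper uses the skew-product structure of the Brin map over a suspension flow, writing $R_i=(\tilde g_i, T^{i\alpha/N})$, so the flow direction is trivially divisible into small pieces. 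Your Moser-correction step (turning a non-volume-preserving isotopy into a volume-preserving one via Lemma \ref{Equation}) is reasonable and consistent with the paper's use of Dacorogna--Moser elsewhere, but it sits downstream of the missing isotopy, so the proof as written does not go through.
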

  \begin{proof} Now let us first briefly recall the construction of maps in \cite{K} and prove this proposition for the Katok map.  
  
  {\bf The Katok map:} Let $g_0$ be a hyperbolic automorphism of the $2$-torus $T^2$ which has four fixed points $x_1=(0,0), x_2=(\frac{1}{2}, 0), x_3=(0,\frac{1}{2}),x_4=(\frac{1}{2}, \frac{1}{2})$
  (for example, the automorphism generated by the matrix 
  $\begin{bmatrix}
5 & 8           \\[0.3em]
8 & 13         \\[0.3em]
 \end{bmatrix}$ is appropriate). The desired diffeomorphism $g$ is constructed via the following commutative diagram 

\[
\xymatrix{T^2 \ar[r]^{\phi} \ar[d]^{g_0}
& T^2 \ar[d]^{g_1} \ar[r]^{\phi_1}& T^2 \ar[r]^{\phi_2}\ar[d]^{g_2}&S^2\ar[r]^{\phi_3}\ar[d]^{g_3}&D^2\ar[d]^{g}\\
T^2\ar[r]^{\phi}
& T^2\ar[r]^{\phi_1} &T^2\ar[r]^{\phi_2}&S^2\ar[r]^{\phi_3}&D^2}
\]
where $S^2$ is the unit sphere. The map $g_1$ is obtained by slowing down $g_0$ near
 the points $x_i, 1\leq i\leq 4$. Its construction depends upon a real-valued function $\phi$ which is defined on the unit interval $[0,1]$ and has the following properties:
 \begin{enumerate}
 \item $\phi$ is $C^{ r}$ except for the point $0$;
 \item $\phi(0)=0$ and $\phi(u)=1$ for $u\geq r$ where $0<r<1$ is a number;
 \item $\phi'(u)\geq 0$;
 \item $\int^1_0\frac{du}{\phi(u)}< r$.
 \end{enumerate} 
 with an extra condition on the function $\phi$ expressing a ``very slow" rate of convergence of the integral $\int^1_0\frac{du}{\phi(u)}$ near zero.   Since it is irrelevant to our proof, we skip the detail description about the ``very slow" condition. 
  
 
 Denote by $\tilde{g}^i_{\phi}$  the time one map generated by the vector field $v_{\phi}$ in $D^i_{r_0}, 1\leq i\leq 4$ given as follows:
 $$\dot{s}_1=(\log\alpha)s_1\phi(s_1^2+s_2^2), \dot{s}_2=-(\log\alpha)s_2\phi(s_1^2+s_2^2).$$ 
 Let $D=\cup^{4}_{i=1}D^i_{r_0}.$
 Therefore, the map 
 \[ g_1(x) = \begin{cases} g_0(x) & \quad \text{if }  x\in T^2\backslash D. \\ \tilde{g}^i_{\phi}(x) & \quad \text{if }  x\in D^i_{r_0}.\\ \end{cases} \]
defines a homeomorphism of the torus $T^2$ which is a $C^{ r}$ diffeomorphism everywhere except for the points $x_i, 1\leq i\leq 4$. 
  
  Once the maps $\phi_1,\phi_2,$ and $\phi_3$ are constructed the maps $g_2,g_3$ and $g$ are defined to make the above diagram commutative. We follow \cite{K} and describe a particular choice of maps $\phi_i, 1\leq i\leq 3$. 
  
  In  a neighborhood of each point $x_i, 1\leq i\leq 4$ the map $\phi_1$ is given by 
  $$\phi_1(s_1,s_2)=\frac{1}{\sqrt{k_0(s_1^2+s_2^2)}}\Big(\int^{s_1^2+s_2^2}_0\frac{du}{\phi(u)}\Big)^{\frac{1}{2}}(s_1,s_2)$$ 
  and it is identity in $T^2\backslash D$. 
  
  The map $\phi_2:T^2\rightarrow S^2$ is a double branched covering which is regular and $C^{ r}$ everywhere except for the points $x_i, 1\leq i\leq 4$ where it branches. There is a local coordinate system $(s_1,s_2)$ in a neighborhood of each points $p_i=\phi_2(x_i), 1\leq i\leq 4$ such that 
  $$\phi_2(s_1,s_2)=\big(\frac{s_1^2-s_2^2}{\sqrt{s_1^2+s_2^2}}, \frac{2s_1s_2}{\sqrt{s_1^2+s_2^2}}\big).$$

  In a neighborhood of $p_4=\phi_2(x_4)$, the map $\phi_3$ is given by 
  $$\phi_3(\tau_1,\tau_2)=\Big( \frac{\tau_1\sqrt{1-\tau_1^2-\tau_2^2}}{\sqrt{\tau_1^2+\tau_2^2}},\frac{\tau_2\sqrt{1-\tau_1^2-\tau_2^2}}{\sqrt{\tau_1^2+\tau_2^2}}\Big)$$
  and it is extended to a $C^{ r}$ diffeomorphism $\phi_3$ between $S^2\backslash \{p_4\}$ and $\text{int} D^2$.

  This concludes the construction of the diffeomorphism $g$ given in Theorem \ref{DP}.
  The following lemma given in \cite{HPT} shows that the map $g$ is diffeotopic to the identity map.  As we only need part of the results in Proposition 4 of \cite{HPT}, we prefer to present the proof here. 
 \begin{Lem}There exists a map $G : D^2 \times [0, 1]\rightarrow D^2$ with the following properties:
 \begin{enumerate}
 \item  $G(x,t)$ is $C^{ r}$ in $(x,t)$;
\item  $G(\cdot,0)=\text{id}$ and $G(\cdot,1) = g$;
 \item  for any $t \in [0, 1]$ the map $G(\cdot, t): D^2\rightarrow D^2$ is an area-preserving diffeomorphism.
 \end{enumerate}
 \end{Lem}
 \begin{proof}
 {\bf Step 1:} First of all, we shall prove that there exist small neighborhoods $U\subset V$ of $\partial D^2$ and  a vector field $F$ in $V$ which generates
an area-preserving flow $g^t : U \rightarrow D^2, -2 < t < 2$ for which $g\vert U = g^1$.  
 
 According to the construction of $g_1$,  $g_1\vert_{D^i_{r_0}}, i=1,2,3,4$ are the time one maps of the vector field $v_{\phi}$ given by:
 $$\dot{s}_1=(\log\alpha)s_1\phi(s_1^2+s_2^2), \dot{s}_2=-(\log\alpha)s_2\phi(s_1^2+s_2^2).$$    Since $g_2=\phi_1\circ g_1\circ \phi_1^{-1}$ and $\phi_1$ is differentiable, 
 we have $g_2$ is a time-one map for the vector field $v_{\phi,\phi_1}=d\phi_1\circ v_{\phi}\circ \phi_1^{-1}$.   Notice that the map $\phi_2:T^2\rightarrow S^2$ is a double branched covering and is regular and $C^{ r}$ everywhere except for the points $x_i$ where it branches.  Take the vector field $v_{\phi,\phi_1,\phi_2}=d\phi_2\circ v_{\phi,\phi_1}\circ \phi_2^{-1}$.  We claim $v_{\phi,\phi_1,\phi_2}$ is well defined even though $\phi_2$ is a two-to-one map. Assume $x$ and $-x$ be to the two $\phi_2$-pre-images of the point $y$ near $p_i$, namely, $\phi_2(x)=\phi_2(-x)=y\in S^2$.  Observing that $v_{\phi}(-x)=-v_{\phi}(x)$ and $\phi_1(-s_1,-s_2)=-\phi(s_1,s_2)$, we have that $$v_{\phi,\phi_1}(-x)=-v_{\phi,\phi_1}(x).$$  Thus, we have
 \begin{eqnarray*} v_{\phi,\phi_1,\phi_2}(y)&=&d\phi_2\circ v_{\phi,\phi_1}\circ \phi_2^{-1}(y)\\
 &=&d\phi_2(-x)\cdot v_{\phi,\phi_1}(-x)\cdot (\phi_2(-x))^{-1}\\
 &=&d\phi_2(x)\cdot v_{\phi,\phi_1}(x)\cdot (\phi_2(x))^{-1}.
 \end{eqnarray*}
 Thus, we finish the proof of the claim.  Similarly, define a new vector field $v_{\phi,\phi_1,\phi_2,\phi_3}$ around  $\partial D^2$ by   $v_{\phi,\phi_1,\phi_2,\phi_3}=d\phi_3\circ v_{\phi,\phi_1,\phi_2}\circ \phi_3$.    It turns out that $g\vert U$ is the time one map of the flow $g_t:V\rightarrow D^2$ generated by the vector field $F=v_{\phi,\phi_1,\phi_2,\phi_3}.$ 
 
{\bf Step 2:} We extend the vector field $F$ in the neighborhood $U$ of $\partial D^2$ to a vector field $\hat{F}$ on the whole disk $D^2$. Let $\hat{g}: D^2\rightarrow D^2$ be the time one map of the flow $\hat{F}$.  It follows that $\hat{g}^t\vert U=g\vert U$. Thus, we can define a new map $\tilde{g}=\hat{g}^{-1}\circ g$. Then, $\tilde{g}\vert U=\text{Id}\vert U$. We need a  result of Smale (see \cite{S}, Theorem B) saying that  the space of $C^{ r}$ diffeomorphisms of the unit disk which are equal to the identity in some neighborhood of the boundary  is contractible  with the $C^r$ topology, $1< r \leq \infty$. 
Applying this result to the diffeomorphism $\tilde{g}=\hat{g}\circ g^{-1}$, which is equal to the identity on $U$, we obtain a
homotopy $\tilde{G} : D^2 × [0, 1] \rightarrow D^2$ such that $\tilde{G}(\cdot, 0)= \text{id} \vert D^2$ and $\tilde{G}(\cdot, 1) = \tilde{g}$. Moreover, $\tilde{G}$ is $C^{ r}$ in $ (x, t)$. Hence $\tilde{G}$ is a diffeotopy from $\text{id}$ to $\tilde{g}$. Therefore, for each $t \in [0, 1]$, there is a neighborhood $U_t$ of $\partial D^2$ such that $\tilde{G}(\cdot, t)\vert U_t = \text{id}\vert U_t$. One can show that the set
$U = \text{int} U_t,  t\in[0,1]$ is not empty and is a neighborhood of $\partial D^2$. Denote $\tilde{g}^t = \tilde{G}(\cdot,t)$. It follows that $G(\cdot, t)= \hat{g}\circ \tilde{g}^t$ satisfies what we want.
   \end{proof}
 Then, $g$ can be written as $g=\tilde{g}_{N(\delta)}\circ\cdots\circ \tilde{g}_1$.  At this point, we finish the proof of Proposition \ref{Prop1} for the Katok map.   
 Now we shall outline Brin's construction from \cite{Brin}. 
 
 {\bf The Brin map:} Given a positive integer $n\geq 5$, set $k=[\frac{n-3}{2}]$ and consider the $(n-3)\times (n-3)$ block diagnal matrix $A=(A_i)$, where $A_i=\begin{bmatrix}
 2&1\\
 1&1
 \end{bmatrix}
 $ for $i<k$ and 
 \[A_k=\begin{cases}\begin{bmatrix} 2&1 \\ 1&1\\\end{bmatrix} \text{ if } n \text{ is odd. } \\[1mm]
 \begin{bmatrix}2&1&1\\1&1&1\\0&1&2\\
 \end{bmatrix} \text{ if } n \text{ is even. }
 \end{cases}
 \]
 Let $T^t$ be the suspension flow over $A$ with the roof function $H=H_0+\varepsilon H(x),$ where $H_0$ is a constant and the function $H(x)$ is such  that $|H(x)|\leq 1.$
 The flow $T^t$ is an Anosov flow on the phase space $\mathcal{Y}^{n-2}$ which is diffeomorphic to the product $T^{n-3}\times [0,1]$, where the tori $T^{n-3}\times 0$  and $T^{n-3}\times 1$ are identified by the action of $A$. One can choose the function $H(x)$ such that the flow $T^t$ has the accessibility property. Consider the following skew product map $R$ of the manifold $M=D^2\times \mathcal{Y}^{n-2}$ 
 $$R(z)=R(x,y)=(g(z), T^{\alpha(x)}(y)), \text{ for } z=(x,y),$$
 where $g$ is the Katok map constructed above and $\alpha:D^2\rightarrow \mathbb{R}$ is a non-negative $C^{ r}$ function which is equal to zero in the neighborhood $\mathcal{U}$ of the singularity set $\{q_1,q_2,q_3\}\cup \partial D^2$ where $q_i=\phi_3(p_i)$ and is strictly positive otherwise.  
 \begin{Lem}\cite{Brin}There exists a smooth embedding of the manifold $\mathcal{Y}^{n-2}$ into $\mathbb{R}^n$. 
 \end{Lem}
 So we can embed $R$ onto any compact smooth Riemannian manifold $K$ of dimension $n\geq 5.$ Since $T^t$ is a flow and $g=\tilde{g}_{N(\delta)}\circ\cdots\circ \tilde{g}_1$, we have $R=R_{N(\delta)}\circ\cdots\circ R_1$ where $R_i=(\tilde{g}_i, T^{\frac{i\alpha}{N(\delta)}})$ are $C^1$ close to identity.  
 For the Brin map, there exists a direction with zero Lyapunov exponent, i.e. the time direction in the suspension progress. Although there may not exist globally dominated splitting for Brin's map $R$,  Dolgopyat and Pesin \cite{DP} proved that they can perturb this map into a map with non-zero Lyapunov exponents. 
 \begin{Lem}[\cite{DP}] Given any $\varepsilon>0$, there is a $C^{ r}$ diffeomorphism $P:M\rightarrow M$ such that 
 \begin{enumerate}
 \item $d_{C^1}(P,R)\leq \varepsilon$;
 \item $P$ is ergodic and invariant with respect to a smooth measure;
 \item $P$ has only non-zero Lyapunov exponents. 
 \end{enumerate}
 \end{Lem}
 So, we can also write $P=R_{N(\delta)}\circ\cdots\circ R_1 \circ P_0$ where $P_0$ is also $C^1$ close to identity.  We leave the cases when $\dim M=3$ and $\dim M=4$ to the reader. Thus, we finish the proof of Proposition \ref{Prop1}.
  \end{proof}
 
 \begin{Prop}\label{Prop}Let $M$ be a smooth connected Riemannian manifold with the volume measure $m$. Every $C^1$ conservative diffeomorphims $\Diff^1_m(M)$ with elliptic periodic points can be arbitrarily approximated by a $C^{ r}$ conservative diffeomorphism with non-zero Lyapunov exponents on a set with positive volume. 
 \end{Prop}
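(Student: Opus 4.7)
The plan is to exploit the fact that near an elliptic periodic point $p$ of period $q$, the condition $Df^{q}(p)=\mathrm{Id}$ forces the return map $f^{q}$ to be $C^{1}$-close to the identity on a sufficiently small ball around $p$. This creates enough room to embed a rescaled copy of the nonuniformly hyperbolic diffeomorphism of Theorem \ref{DP} into the dynamics via the Pasting Lemma, while the decomposition in Proposition \ref{Prop1} allows this embedding to be realized through a sequence of $C^{1}$-small conservative modifications of $f$ rather than one large one.

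First I would carry out a preliminary smoothing: since $\Diff^{r}_{m}(M)$ is $C^{1}$-dense in $\Diff^{1}_{m}(M)$, and a small conservative Franks-type perturbation (available in the volume-preserving category via Bonatti--Crovisier) restores an exact elliptic periodic point of period $q$ close to $p$ after the smoothing, I may assume $f\in\Diff^{r}_{m}(M)$ with $Df^{q}(p)=\mathrm{Id}$. Choose a chart $\psi\colon V\to B(0,1)\subset\RR^{n}$ centered at $p$ with $V,fV,\ldots,f^{q-1}V$ pairwise disjoint; for $r_{0}$ sufficiently small, the return map $f^{q}$ read in $\psi$-coordinates is arbitrarily $C^{1}$-close to the identity on $B(0,r_{0})$.

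Given $\delta>0$, apply Proposition \ref{Prop1} to the Dolgopyat--Pesin map $\Phi\in\Diff^{r}_{m}(D^{n})$ of Theorem \ref{DP} to obtain a decomposition $\Phi=\Phi_{N}\circ\cdots\circ\Phi_{1}$ with $\|\Phi_{k}-\mathrm{id}\|_{C^{1}}\le\delta$. I then insert these $N$ small factors one by one along an orbit segment of $p$: pick indices $i(k)\in\{0,\ldots,q-1\}$ and radii $r_{0}>r_{1}>\cdots>r_{N}>0$, rescale each $\Phi_{k}$ to a volume-preserving local diffeomorphism supported in a ball of radius $r_{k}$ around $y_{i(k)}=f^{i(k)}(p)$ (rescaling preserves the $C^{1}$-norm), and apply Lemma \ref{pasting conservative2} to paste the rescaled $\Phi_{k}$ with the current map. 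Each step yields a $C^{r}$-conservative, $C^{1}$-small modification, so after $N$ pastings I obtain $G\in\Diff^{r}_{m}(M)$ that is $C^{1}$-close to $f$. The indices $i(k)$ and scales $r_{k}$ are arranged so that the $G$-orbit of any point $x$ in a small ball $U\subset B(p,r_{N})$ encounters the factors $\Phi_{1},\Phi_{2},\ldots,\Phi_{N}$ in order (using that $f^{q}$ acts as a near-identity map, so orbits traverse the nested annuli slowly and in sequence). Consequently, after some $L$ iterates, $G^{L}|_{U}$ coincides with the rescaled Katok map $\Phi^{r_{N}}$ up to a $C^{1}$-error that vanishes with $r_{0}$. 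Then $U$ is $G^{L}$-invariant, its $G$-orbit is a positive-volume $G$-invariant set, and the Lyapunov exponents of $G$ on this orbit equal $1/L$ times those of $\Phi^{r_{N}}$, hence are nonzero at $m$-almost every point.

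The main obstacle is the combinatorial and quantitative orchestration of the $N$ pastings: arranging the supports and scales so that (i) the near-identity return dynamics of $f^{q}$ genuinely carry returning orbits through the annuli in the correct order, (ii) the rescaled supports are pairwise disjoint along the actual $f$-dynamics, and (iii) the accumulated $C^{1}$-errors from the $N$ pastings and from the near-identity approximation of $f^{q}$ can still be made arbitrarily small by choosing $\delta$ and $r_{0}$ small. The nested-scale construction, which uses that $f^{q}|_{B(p,r_{0})}$ is $C^{1}$-close to the identity to cram $N$ independent perturbation slots into a single chart, is where the ``flexibility of $C^{1}$ topology'' alluded to in the introduction is essential, and is the delicate heart of the argument.
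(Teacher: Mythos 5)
Your overall strategy---decompose the Dolgopyat--Pesin map into $C^1$-small conservative factors via Proposition \ref{Prop1}, insert the factors one at a time along a periodic orbit of disks near the elliptic point, and paste with Lemma \ref{pasting conservative2}---is the same as the paper's. But the device you propose for creating $N$ disjoint perturbation slots is the step that fails, and it is exactly the step the paper handles differently. Your ``nested radii'' scheme has three problems. First, balls $B(y_{i(k)},r_k)$ with $r_0>r_1>\cdots>r_N$ centered on points of the same short orbit are not pairwise disjoint: a point of $U\subset B(p,r_N)$ lies in all of them at once, so on a single pass through the chart it is acted on by every factor whose support contains it, not by the factors in sequence. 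Second, there is no dynamical mechanism transporting orbits ``through the nested annuli in order'': after the preliminary linearization the return map $f^{q}$ (or $f^{k_1 q}$) is the identity on a neighborhood of $p$, so returning points do not drift radially at all. Third, and most seriously, if you rescale the factors $\Phi_k$ to different radii $r_k$, the composition of the inserted maps around one period is $h_N^{-1}\Phi_N h_N\circ\cdots\circ h_1^{-1}\Phi_1 h_1$ with distinct conjugacies $h_k$; this is not conjugate to $\Phi=\Phi_N\circ\cdots\circ\Phi_1$, and there is no reason for it to have nonzero Lyapunov exponents.

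The paper's resolution is its Step 2: after linearizing so that $g_1^{k_1P(p)}=\mathrm{Id}$ on a disk $D_1$, compose with the rigid rotation $R_{2\pi/(kk_1)}$ of $D_1$. A small off-center sub-disk $D_2\subset D_1$ then has $kk_1P(p)$ pairwise disjoint iterates before returning, i.e., the period of the invariant disk is inflated to match the number of factors $N=kk_1$. This produces $N$ genuinely disjoint slots all at a single common scale, and each factor $f_i$ is inserted conjugated only by the isometry $R_{2i\pi/(kk_1)}$, so the first-return map to $D_2$ is exactly the nonuniformly hyperbolic map $g=f_N\circ\cdots\circ f_1$. You correctly identified the orchestration of the $N$ pastings as the delicate heart of the argument, but the mechanism you offer for it does not close; the rotation trick is the missing idea.
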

     
 \begin{proof}
 Assume $f$ to be the $C^1$ conservative diffeomorphism  with the elliptic periodic point $p$. Let
 $P(p)$ be the period of the periodic point $p$. 
 
 {\bf Step 1:} By a $C^1$ small 
perturbation, for any small number $\varepsilon>0$, there exists a small  ball (under the local canonical coordinates) $D_1$ in
$M$, an integer $k_1$ and a $C^r$ conservative diffeomorphism $g_1\in\Diff^1_m(M) $, such that 
\begin{enumerate}
\item $g_1^{k_1P(p)}|_{D_1}=Id$;
\item $g_1^{k_1i}(D_1)\cap g_1^{k_1j}(D_1)=\emptyset,$ for $i\neq j\in [0,P(p)-1]$;
\item  $d_{C^1}(f,g_1)\leq \varepsilon$.  
\end{enumerate}
The map $g_1$ can be obtained through the local linearization of the diffeomorphism $f$ at the elliptic periodic point $p$.  Fix a sufficiently small number $\varepsilon>0$.  Denote by $\tilde{D}_1=\cup_{i=0}^{P(p)-1} g_1^{k_1i}(D_1).$

{\bf Step 2:} Now let us do another $C^1$  small perturbation to get any periodic disk with large period.  For any small number $\frac{1}{k}\leq \varepsilon$, where $k\in \mathbb{N}^+$, consider the rotation $R_{\frac{2\pi}{kk_1}}$ with rotation angle $\frac{2\pi}{kk_1}.$ It is easy to see that $R_{\frac{2\pi}{kk_1}}(D_1)=D_1.$
There exists a $C^r$ conservative diffeomorphism $g_2\in\Diff^1_m(M)$, such that
\begin{enumerate}
\item $g_2(x)=g_1\circ R_{\frac{2\pi}{kk_1}}(x),$ if $x\in D_1$;
\item $g_2(x)=g_1(x),$ if $x\in g^{i}(D_1)$ for $i\in[1,P(p)-1]\cap\mathbb{Z}$ ;
\item $d_{C^1}(g_1,g_2)\leq \varepsilon$. 
 \end{enumerate} 
 Thus, we have $g_2^{kk_1P(p)}|_{D_1}=Id$ and a smaller disk $D_2\subset D_1$ such that $g_2^{i}(D_2)\cap g_2^j(D_2)=\emptyset,$ for any $i\neq j\in [0,P(p)kk_1-1]\cap\mathbb{Z}.$    Denote $\tilde{D}_2=\cup^{kk_1P(p)-1}_{i=1}g_2^i(D_2).$

{\bf Step 3:} Let us perturb $g_2$  to  embed  maps  with non-uniformly hyperbolicity onto $\tilde{D}_2$.  Let $f$ be the conservative map constructed in Theorem \ref{DP} for $D_2$.  By Proposition \ref{Prop1},  there exists $C^{r}$ conservative diffeomorphisms $\{f_i\}_{i=1}^N$ on $D_2$  such that  $\| f_i-\text{Id }\|_{C^1}\leq \varepsilon$ and  $g=f_N\circ f_{N-1}\circ \cdots\circ f_1$ is hyperbolic and ergodic on $D_2$. Without loss of generalization, we assume that $N=kk_1$.  
 Then, there exists a $C^{ r}$ conservative diffeomorphism $g_3\in \Diff^1_m(M)$, such that 
\begin{enumerate}
\item $g_3(x)=g_2\circ R_{\frac{2i\pi}{kk_1}}\circ f_i \circ R^{-1}_{\frac{2i\pi}{kk_1}}(x)$ if $x\in g_2^{iP(p)}(D_2)$;\smallskip
\item $g_3(x)=g_2(x),$ if $x\in g_2^{(i+1)P(p)+j}(D_2), j\in [1,P(p)-1]\cap\mathbb{Z}$;\smallskip
\item $d_{C^1}(g_3,g_2)\leq \varepsilon$,
 \end{enumerate} 
where $i\in[0,kk_1-1]\cap\mathbb{Z}.$ Thus, $g_3^{i}=R_{\frac{2i\pi}{kk_1}}\circ f_i\circ \cdots\circ f_1,$ for any $x\in D_2$ and $i\in[1,kk_1]\cap\mathbb{Z}$.  Moreover, $g_3^{P(p)kk_1}(x)=g(x),$ for any $x\in D_2.$  
The map $g_3$ is the one we want. We finish the proof of Proposition \ref{Prop}.   
\end{proof}
Now we are ready to present the proof of our main Theorem \ref{Main}:
\begin{proof}[The proof of Theorem \ref{Main}:]
Denote by $\Diff^{\sharp}(M)$ the maps in $\Diff^1_m(M)$ with global dominated splitting.  According to Theorem \ref{ACWT} and Corollary \ref{ACW}, we obtain the $C^1$ generic maps  in $\Diff^{\sharp}(M)$ are nonuniformly Anosov.  Thus, it follows from Lemma \ref{Robust block} that there exists a $C^1$ dense and open subset  in $\Diff^{\sharp}(M)$ such that $m(\text{NUH}(f))$ is positive.  On the other hand, by the $C^1$ density of $C^{ r}$ conservative diffeomorphisms in $\Diff^1_m(M)$ (\cite{A2}), we have  the maps in $\Diff^{ r}_m(M)$ with $m(\text{NUH}(f))>0$ are $C^1$ dense in $\overline{\Diff^{\sharp}(M)}$. 

  It remains to prove the maps without zero Lyapunov exponents on a positive volume subset are also dense in $\Diff^1_m(M)\backslash \overline{\Diff^{\sharp}(M)}$.  Assume $f$ to be a map in $\Diff^1_m(M)\backslash \overline{\Diff^{\sharp}(M)}$.  By Theorem \ref{BC}, we can assume that $f$ is  transitive.  Applying Theorem \ref{BDP}, we obtain $g_n\rightarrow f$ such that $g_n$ has elliptic periodic point $p_n$ for any $n\in \mathbb{N}$.  This implies $D^{P(p_n)}g_n|_{p_n}=Id$. Thus, combining Proposition \ref{Prop1} and Proposition \ref{Prop}, we can do perturbation to $g_n$ around the elliptic periodic points $p_n$ such that the $\tilde{g}_n\in \Diff^{ r}_m(M)$ is nonuniformly hyperbolic around $p_n$ and with nonuniformly hyperbolic behavior and positive entropy. Moreover, by  Pasting Lemma \ref{pasting conservative2} of conservative maps, we can extend $g_3$ to the whole manifold $M$. Now we complete the proof of Theorem \ref{Main}.
 \end{proof}

\section{The proof of Theorem \ref{Symplectic}}

\begin{Lem}\label{Lem1}
  Let $f\in \mathcal{S}ym^1_{\omega}(M)$ and $p$ be an elliptic periodic point of $f$. Then there is a $C^1$ arbitrarily small perturbation $g\in \mathcal{S}ym^{ r}_{\omega}(M)$ of $f$ without zero Lyapunov exponents on a positive measure set. 
\end{Lem}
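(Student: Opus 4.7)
The plan is to replay the three-step construction from Proposition \ref{Prop} in the symplectic category, using Lemma \ref{pasting symplectic} in place of Lemma \ref{pasting conservative2}. Since $p$ is elliptic, one has $Df^{P(p)}(p)=\mathrm{Id}$, so in a Darboux chart around $p$ the map $f^{P(p)}$ is $C^1$-close to the identity on a small symplectic ball, giving us ample room for symplectic perturbations.

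First, a $C^1$-small symplectic perturbation via Lemma \ref{pasting symplectic} turns $f^{P(p)}$ into the identity on a small symplectic ball $D_1\ni p$, producing $g_1\in\mathcal{S}ym^r_\omega(M)$ and an integer $k_1$ with $g_1^{k_1P(p)}|_{D_1}=\mathrm{Id}$ and the iterates $\{g_1^{jk_1}(D_1)\}_{j=0}^{P(p)-1}$ pairwise disjoint. Second, composing with a small symplectic rotation $R_{2\pi/(kk_1)}$ on $D_1$ (symplectic in Darboux coordinates and $C^1$-small for $k$ large) yields a perturbation $g_2\in\mathcal{S}ym^r_\omega(M)$ together with a smaller ball $D_2\subset D_1$ whose $g_2$-orbit $\{g_2^j(D_2)\}_{j=0}^{N-1}$ of length $N:=kk_1P(p)$ is pairwise disjoint. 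Third, one fixes a symplectic nonuniformly hyperbolic diffeomorphism $G\colon D_2\to D_2$ with no zero Lyapunov exponents on a set of positive volume, writes it as $G=G_N\circ\cdots\circ G_1$ with each $G_i$ symplectic and $C^1$-close to the identity, and inserts the factors one by one on the successive iterates $g_2^{i-1}(D_2)$ using Lemma \ref{pasting symplectic}. The resulting map $g\in\mathcal{S}ym^r_\omega(M)$ is $C^1$-close to $f$, and its return map to $D_2$ is exactly $g^N|_{D_2}=G$; hence $g$ has no zero Lyapunov exponents on the positive-measure set $\bigcup_{j=0}^{N-1}g_2^j(D_2)$.

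The main obstacle is the symplectic version of Proposition \ref{Prop1}: producing a symplectic nonuniformly hyperbolic diffeomorphism of $D^{2d}$ that factors into $N$ symplectic $C^1$-small pieces. In dimension two, area preservation coincides with symplectic preservation and Smale's contractibility theorem applies within the area-preserving category, so Proposition \ref{Prop1} applied to the Katok map delivers the factorization as is. In dimension $2d\geq 4$, one takes $G=g_K\times\cdots\times g_K$, the product of $d$ copies of the Katok map on $D^2\times\cdots\times D^2$ embedded in $D_2$ through a Darboux splitting, which is symplectic for the product form $\omega=\sum dx_i\wedge dy_i$ and has all nonzero Lyapunov exponents on a positive-volume subset; the desired factorization is obtained as the product of the coordinatewise decompositions supplied by Proposition \ref{Prop1} in dimension two. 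A secondary technical point is that, unlike Lemma \ref{pasting conservative2}, the symplectic pasting lemma is stated with $C^r$-closeness hypotheses, so the interpolation between $f^{P(p)}$ and the identity in Step~1 must be arranged within a $C^1$-small symplectic isotopy; this can be done by combining the Darboux normal form with a cut-off in Hamiltonian coordinates before invoking Lemma \ref{pasting symplectic}.
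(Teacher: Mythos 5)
Your proposal follows essentially the same route as the paper: Steps 1--2 of Proposition \ref{Prop} (linearize at the elliptic point, then compose with a rational rotation to get a long disjoint periodic disk orbit), then take the $d$-fold product of the Katok map on $D^2\times\cdots\times D^2$ as the symplectic nonuniformly hyperbolic model, factor it coordinatewise into $C^1$-small symplectic pieces via Proposition \ref{Prop1}, distribute the factors along the orbit of $D_2$, and glue with the symplectic Pasting Lemma \ref{pasting symplectic}. The one point you raise that the paper glosses over --- that Lemma \ref{pasting symplectic} is stated with $C^r$-closeness, so the Step 1 interpolation must be arranged $C^1$-smally in Hamiltonian coordinates --- is a legitimate refinement rather than a change of method.
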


\begin{proof}Let $P(p)$ be the period of the periodic point $p$. By a small
perturbation, we can assume that $D^nf$ is a rational rotation. Then
there exists a positive integer $k_1P(p)>0$ and a measurable set $D_1$ in
$M$, such that $f^{k_1P(p)}|_{D_1}=Id$. Following the second steps in the proof of Proposition \ref{Prop}, for any small number $\varepsilon>0$, we obtain $D_2$ and any integer $k$ large enough such that  there exists a small perturbation $g_2$ of $f$ such that 
$g_2^{kk_1P(p)}\vert D_2=Id$, $d_{C^1}(g_2, f)\leq \varepsilon$ and $g_2^{i}(D_2)\cap g_2^j(D_2)=\emptyset,$ for any $i\neq j\in [0,P(p)kk_1-1]\cap\mathbb{Z}.$ 

Let us perturb $g_2$  to  embed  a map  with non-uniformly hyperbolicity behavior  onto $D_2$. We can write $D_2=D^2\times \cdots \times D^2$.  Take $g(x)=\underbrace{h\times\cdots\times h}_{d \mbox{ times}}$ on $D_2$, where $h$ is the Katok map in  \cite{K} (or see the proof of Proposition \ref{Prop1}) on $D^2$. Hence $g$ is a symplectic diffeomorphisms without zero exponents  on $D_2$. However it is not a small
perturbation of $Id$. According to Proposition \ref{Prop1}, for any $\varepsilon>0$ small enough, there exists a sequence of small conservative perturbations
$\{h_i\}^{N}_{i=1}$ such that
$$h_N\circ\cdots\circ h_0=h$$
and $d_{C^1}(h_i,Id)\leq \varepsilon.$
Thus it follows that 
\begin{eqnarray*}
g&=&h_N\circ\cdots\circ h_0\times \cdots \times h_N\circ\cdots\circ h_0\\
&=& (h_N,\cdots, h_N)\circ \cdots \circ (h_0,\cdots,h_0)
\end{eqnarray*}
Without loss of generalization, we assume  $N=kk_1$.   Consider the rotation $R_{\frac{2\pi}{kk_1}}$  on $D^2$ with rotation angle $\frac{2\pi}{kk_1}.$ Then, there exists a $C^{ r}$ conservative diffeomorphism $g_3\in \text{Sym}^r_{\omega}(M)$, such that 
\begin{enumerate}
\item $g_3(x)=g_2\circ (R_{\frac{2i\pi}{kk_1}}, \cdots, R_{\frac{2i\pi}{kk_1}})\circ (h_i,\cdots, h_i) \circ (R^{-1}_{\frac{2i\pi}{kk_1}}(x_1), \cdots, R^{-1}_{\frac{2i\pi}{kk_1}}(x_d))$ if $x\in g_2^{iP(p)}(D_2)$;\smallskip
\item $g_3(x)=g_2(x),$ if $x\in g_2^{(i+1)P(p)+j}(D_2), j\in [1,P(p)-1]$;\smallskip
\item $d_{C^1}(g_3,g_2)\leq \varepsilon$,
 \end{enumerate} 
where $i\in[0,kk_1-1]\cap\mathbb{Z}.$ Thus, $g_3^{i}=(R_{\frac{2i\pi}{kk_1}}, \cdots, R_{\frac{2i\pi}{kk_1}})\circ (h_i,\cdots, h_i)\circ \cdots(h_0,\cdots, h_0),$ for any $x\in D_2$ and $i\in[1,kk_1]\cap\mathbb{Z}$.  Moreover, $g_3^{P(p)kk_1}(x)=g(x),$ for any $x\in D_2.$  
Hence the map $g_3$ has no zero Lyapunov exponents on the positive measure set $D_2$ and  is $C^1$ close to $f$. 
Moreover, by  Pasting Lemma \ref{pasting symplectic} of symplectic maps, we can extend $g_3$ to the whole manifold $M$. Now we finish the proof. 
\end{proof}
\begin{proof}[The proof of Theorem \ref{Symplectic}]
By the robustness of partially hyperbolic diffeomorphisms and the the density of smooth symplectic maps in $C^1$ symplectic maps (\cite{Z}),  we obtain $C^{ r}$ partially hyperbolic  symplectic diffeomorphisms  is $C^1$ dense in the $C^1$ partially hyperbolic symplectic diffeomorphisms.   Denote by $\text{Sym}^{\sharp}_{\omega}(M)$ the $C^1$ partially hyperbolic symplectic diffeomorphisms. It suffices to tackle with the complementary subset $\text{Sym}^{\sharp}_{\omega}(M)^c$ of $\text{Sym}^{\sharp}_{\omega}(M)$ in the $C^1$ symplectic diffeomorphisms. 
Due to Lemma \ref{Lem1},  there is a  $C^1$ dense subset of diffeomorphisms in $\text{Sym}^{\sharp}_{\omega}(M)^c$ with elliptic periodic points. 
Moreover, applying Lemma \ref{LemSX},  we obtain that the $C^r$ symplectic diffeomorphisms with non-zero Lyapunov exponents on a set of positive volume are $C^1$ dense in $\text{Sym}^{\sharp}_{\omega}(M)^c$. Thus, we complete the proof.
\end{proof}

 \end{document}